\theoremstyle{plain}
\newtheorem{thm}{Theorem}
\newtheorem{prop}[thm]{Proposition}
\newtheorem{propdefn}[thm]{Proposition-definition}
\newtheorem{lem}[thm]{Lemma}
\newtheorem{cor}[thm]{Corollary}
\theoremstyle{definition}
\newtheorem{defn}{Definition}
\newtheorem{exm}{Example}
\theoremstyle{remark}
\newcommand{\eps}{\varepsilon}
\newcommand{\trifun}[4]{{{#1_{#2}}^{#3}}_{#4}}
\newcommand{\setsuch}[2]{\left\{ #1 \; \middle| \; #2 \right\}}
\newcommand{\maxof}{\max\,}
\newcommand{\minof}{\min\,}
\DeclareMathOperator{\tr}{tr}
\DeclareMathOperator{\interior}{int}
\DeclareMathOperator{\sgn}{sgn}
\begin{document}

\title{Harmonic functions on the Sierpinski triangle}
\author{Ilia Smilga}

\maketitle

\begin{abstract}
In this paper, we give a few results on the local behavior of harmonic functions on the Sierpinski triangle - more precisely, of their restriction to a side of the triangle. First we present a general formula that gives the Hölder exponent of such a function in a given point. From this formula, we deduce an explicit algorithm to calculate this exponent in any rational point, and the fact that the derivative of such a function is always equal to $0$, $\infty$ or undefined.
\end{abstract}

\section{Introduction}
\label{sec:intro}

\subsection{Notations and conventions}
\label{sec:notations}
In all this text, $\omega$ stands for the cubic root of $-1$ with positive imaginary part.

For $z \in \mathbb{C}$, we define $h_z: \mathbb{C} \to \mathbb{C}$ to be the homothety of center $z$ and ratio $\frac{1}{2}$: $h_z(w) = \frac{w+z}{2}$.

$\mathbb{N}$ stands for the set of all nonnegative integers.

When talking about a positive quantity, we shall say that it is \emph{positively bounded} if it is bounded from above by a finite constant and from below by a positive constant. This is equivalent to saying that its logarithm is bounded.

When we say that some derivative is well-defined, we shall mean that the rate of change has a limit in $\bar{\mathbb{R}}$, i. e. we shall also implicitly admit infinitive derivatives.

We will often need to make estimations up to a positively bounded multiplicative constant. As all norms on a finite-dimensional vector space are equivalent, in such a context, we have no need to distinguish them. So we shall usually simply write ``$\|\bullet\|$'', implying that the statement we are making is true for any norm. However, to perform some calculations, we will need to use some specific norms: we shall then use an index to specify which norm we are talking about.

We will also often need to manipulate products of the form $F_{a_1} \ldots F_{a_n}$, where $F$ is some kind of operator that can take one of two values $F_0$ and $F_1$ (for example $h$, $M$, $\vec{M}$, $\tilde{M}$ and so on). We shall then simply write $F_{a_1 \ldots a_n}$, or even $F_w$ (if we have already agreed that $w = a_1 \ldots a_n$), as shorthand for such products.

Everything written in positional notation with a radix point is assumed to be in binary (for example $\frac{1}{2} = 0.1$), unless the context clearly shows otherwise (for example $\pi \approx 3.14\ldots$).

As everyone knows, dyadic rational numbers have two binary expansions. Being given $s \in [0, 1)$ (resp. $s \in (0, 1]$), we call its \emph{upper} (resp. \emph{lower}) \emph{binary expansion} the unique binary expansion of $s$ that does not end with an infinite sequence of $1$'s (resp. of $0$'s). For the problems that we will study, both expansions will play perfectly symmetrical roles; hence we will often simply say ``binary expansion'' without specifying, and we shall mean ``upper or lower binary expansion''.

\subsection{Statement of the problem}
\label{sec:problem}

Let $u$ be the unique continuous fonction from $[0,1]$ to $\mathcal{R}^3$ that satisfies:

\begin{equation}\label{eq:recrelu}
\begin{cases}
u(0) = \begin{pmatrix}1\\0\\0\end{pmatrix},\quad   u(1) = \begin{pmatrix}0\\1\\0\end{pmatrix} \\
u \circ h_0 = M_0 \circ u \\
u \circ h_1 = M_1 \circ u,\\
\end{cases}
\end{equation}
where we set
\[M_0 := \frac{1}{5}
\begin{pmatrix}
5 & 2 & 2\\
0 & 2 & 1\\
0 & 1 & 2\\
\end{pmatrix},\quad
M_1 := \frac{1}{5}
\begin{pmatrix}
2 & 0 & 1\\
2 & 5 & 2\\
1 & 0 & 2\\
\end{pmatrix}\]
(see also Figure \ref{fig:ut}). We shall justify later the existence and the unicity of this function.

In \cite{kirillov}, Kirillov formulates the question (called "Problem 2", near the end of Section 3.2) :
\begin{quote}
\emph{Compute explicitly the derivative $u'(t)$ whenever it is possible (e.g. at all rational points).}
\end{quote}
The purpose of our paper is to answer this question. Note however that our notations are slightly different from Kirillov's: what he calls $u$ corresponds in fact to a projection of the function that we have decided to call $u$. Thus we shall actually deal with a slightly more general question.

\subsection{Plan of the paper}
\label{sec:plan}

We start, in section \ref{sec:prelim}, by explaining the interest of the function $u$. While mostly following Kirillov's book, we construct harmonic functions on the Sierpinski triangle, and we show that $u$ appears as the restriction of some kind of "universal" harmonic function to a side of the triangle. The rest of the article is logically independent from this section, except for a few notations and definitions. (For a more thorough introduction to harmonic functions on the Sierpinski triangle and on other fractals, see also Strichartz's book \cite{strichartz}.)

In section \ref{sec:general}, we establish a few general results about that function.

In the following, we study the behaviour of $u(t) - u(s)$ as $t \to s$. At first (section \ref{sec:direction}), we completely describe the behaviour of the direction of this vector. This gives the general shape of the curve $u(t)$, and allows us, a few pages further, to link the local properties of the functions that Kirillov studies with those of $u$.

In section \ref{sec:norm}, we study the norm of this vector. The key result of this paper is Proposition \ref{Hölder exponent general formula}, that links the asymptotic behaviour of this norm with that of an infinite product of matrices indexed on the binary expansion of $s$.

In section \ref{sec:rational}, we answer the second part of the question: we present an algorithm that allows to calculate the derivative of $u$ (hence also that of $\trifun{u}{a}{c}{b}$) in every rational point.

In section \ref{sec:derivative}, we answer the question in the general case: we show that the derivative of $u$, when it exists, can only take the values $0$ or $\infty$, and that it is almost surely equal to $0$.

Finally, in section \ref{sec:numerical}, we give some numerical results, which help us make a few estimations. We establish a simple sufficient condition for the derivative to be equal to $0$, and we conjecture one for the derivative to be equal to $\infty$.

\subsection{Acknowledgements}
\label{sec:thanks}
I would like to thank Mr. Yves Benoist, my master's degree advisor, who helped me a lot with this work.

\section{Preliminaries : harmonic function on the Sierpinski triangle}
\label{sec:prelim}

\begin{defn}
Let us first define successive approximations of the Sierpinski triangle: we start with
\[\mathcal{S}_0 := \{0, 1, \omega\},\]
and we define recursively, for all $n \in \mathbb{N}$,
\[\mathcal{S}_{n+1} := h_0(\mathcal{S}_n) \cup h_1(\mathcal{S}_n) \cup h_\omega(\mathcal{S}_n).\]
Let us give a name to the union of all these approximations:
\[\mathcal{S}_\infty := \bigcup_{n=0}^{+\infty}\mathcal{S}_n,\]
and the \emph{Sierpinski triangle} is then defined as the closure of this union:
\[\mathcal{S} := \overline{\mathcal{S}_\infty}.\]
\end{defn}

\begin{figure}
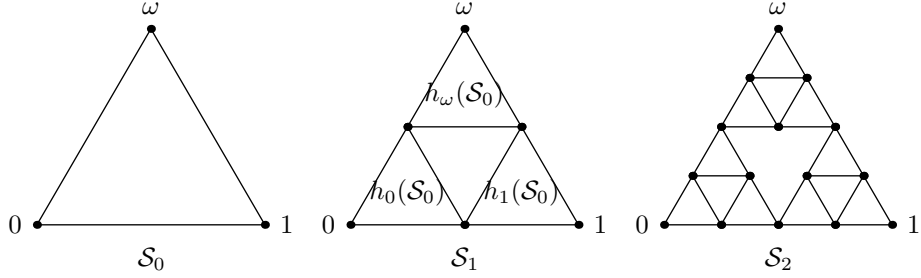

  \captionsetup[subfigure]{font=normalsize, labelformat=empty}
  \centering
  \subfloat[$\mathcal{S}_0$]{\includegraphics{fig1-0.1}} \quad
  \subfloat[$\mathcal{S}_1$]{\includegraphics{fig1-1.1}} \quad
  \subfloat[$\mathcal{S}_2$]{\includegraphics{fig1-2.1}}
  \caption{Discrete approximations of Sierpinski triangle.}
  \label{fig:discappr}
\end{figure}

Note than we can naturally see $\mathcal{S}_n$ as a graph: $\mathcal{S}_0$ is a complete graph on three vertices, and as edges of $\mathcal{S}_{n+1}$, we take the images of the edges of $\mathcal{S}_n$ by the three homotheties. Then it is easy to see that every vertex except $0$, $1$ and $\omega$ has exactly four neighbours. (See Figure \ref{fig:discappr}.) In the following, $E$ is a real vector space, and we will write ``$s \sim t$'' for ``$s$ is a neighbour of $t$''.
\begin{defn}
Let $n \in \mathbb{N}$. A \emph{harmonic function} on $\mathcal{S}_n$ is a function $f: \mathcal{S}_n \to E$ such that:
\[\forall s \in \mathcal{S}_n \setminus \{0, 1, \omega\},\quad   f(s) = \frac{1}{4}\sum_{t \sim s}f(t).\]
\end{defn}
We impose no condition on the values $f(0)$, $f(1)$ and $f(\omega)$: we consider them to be boundary conditions.


\begin{defn}
A \emph{harmonic function} on $\mathcal{S}$ is a continuous function $f: \mathcal{S} \to E$ whose restriction to every $\mathcal{S}_n$ is harmonic.
\end{defn}
\begin{propdefn}
\label{Definition of fabc}
Let $a, b, c \in E$. Then there exists a unique function $f$ that is harmonic on $\mathcal{S}$ and that satisfies the boundary conditions:
\[f(0) = a,\quad   f(1) = b,\quad   f(\omega) = c.\]
(This means in particular that the set of all real-valued harmonic functions on $\mathcal{S}$ is a three-dimensionnal vector space.) We will call this function $\trifun{f}{a}{c}{b}$.
\end{propdefn}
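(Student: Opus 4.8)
The plan is to build $\trifun{f}{a}{c}{b}$ as a consistent limit of harmonic functions on the finite graphs $\mathcal{S}_n$, extracting everything from a discrete maximum principle together with one explicit local computation. Since the construction is $\mathbb{R}$-linear in the triple $(a,b,c)$, and any linear functional $\lambda$ on $E$ turns an $E$-valued harmonic function into a scalar one, I would first reduce to the case $E=\mathbb{R}$: the general case then follows either by applying functionals, or, once the scalar solutions $\phi_0,\phi_1,\phi_\omega$ attached to the boundary data $e_1,e_2,e_3$ are in hand, by setting $\trifun{f}{a}{c}{b} := \phi_0\,a+\phi_1\,b+\phi_\omega\,c$.

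First I would settle the finite levels. For fixed $n$, a real harmonic function on $\mathcal{S}_n$ with prescribed boundary values is the solution of a linear system indexed by the interior vertices. The discrete maximum principle---if the mean-value property holds at an interior vertex where a global maximum is attained, then all its neighbours share that maximal value, so by connectedness the maximum must already occur on $\{0,1,\omega\}$---shows that the only harmonic function vanishing on the boundary is $0$. Hence the system has trivial kernel and is therefore invertible, giving existence and uniqueness of a harmonic $f_n$ on $\mathcal{S}_n$ for every boundary triple.

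The crux, and the step I expect to be the main obstacle, is the compatibility between consecutive levels: I must show that the restriction to $\mathcal{S}_n$ of any function harmonic on $\mathcal{S}_{n+1}$ is again harmonic on $\mathcal{S}_n$. Here I would use that the cells of $\mathcal{S}_n$ (the images of $\mathcal{S}_0$) meet only at vertices, each interior vertex being the junction of exactly two cells. Inside one cell with corner values $\alpha,\beta,\gamma$, the mean-value property at the three new midpoints is precisely the $3\times 3$ system solved at level one, forcing the midpoint opposite $\gamma$ to take the value $\frac{2\alpha+2\beta+\gamma}{5}$, and cyclically. The two midpoint values adjacent to the corner $P$ of value $\alpha$ then sum to $\frac{4\alpha+3\beta+3\gamma}{5}$; so, at an interior vertex $P$ shared by two cells, the level-$(n+1)$ mean-value property reads $4\alpha=\frac{8\alpha+3(\beta+\gamma+\beta'+\gamma')}{5}$, which on clearing the denominator becomes $20\alpha=8\alpha+3(\beta+\gamma+\beta'+\gamma')$, i.e. $4\alpha=\beta+\gamma+\beta'+\gamma'$, exactly the level-$n$ mean-value relation at $P$. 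Thus the two harmonicity conditions are equivalent---this is the precise role of the weights $\tfrac25,\tfrac15$---and restriction sends harmonic functions to harmonic functions.

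Given this, both halves of the statement follow smoothly. Uniqueness is immediate: two harmonic functions on $\mathcal{S}$ with the same boundary values restrict to harmonic functions on each $\mathcal{S}_n$ with the same boundary data, hence agree there by level-$n$ uniqueness, so they agree on the dense set $\mathcal{S}_\infty$ and, by continuity, on all of $\mathcal{S}$. For existence I would take the level-$n$ solutions $f_n$; the compatibility claim together with level-$n$ uniqueness forces $f_{n+1}|_{\mathcal{S}_n}=f_n$, so the $f_n$ glue to a single function $f$ on $\mathcal{S}_\infty$ whose restriction to each $\mathcal{S}_n$ is harmonic. It remains to extend $f$ continuously to $\mathcal{S}=\overline{\mathcal{S}_\infty}$. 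Since each midpoint value is a convex combination, with positive weights $\tfrac25,\tfrac25,\tfrac15$, of the corner values, a short case check shows the oscillation of $f$ over any level-$(n+1)$ subcell is at most $\tfrac35$ of its oscillation over the parent cell; iterating, the oscillation over a level-$n$ cell is $O\!\left((\tfrac35)^n\right)$ while the cell diameter is $O(2^{-n})$. This yields uniform continuity of $f$ on $\mathcal{S}_\infty$, hence a unique continuous extension to $\mathcal{S}$, harmonic on every $\mathcal{S}_n$ by construction; this $f$ is the desired $\trifun{f}{a}{c}{b}$.
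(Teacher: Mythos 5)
Your proof is correct, and it shares the paper's overall skeleton --- induction on the approximations $\mathcal{S}_n$, the discrete maximum principle for uniqueness, and the $\tfrac35$-contraction of corner oscillations against the $\tfrac12$-contraction of diameters for the continuous extension --- but you run the key inductive step in the opposite direction. The paper \emph{defines} $f_{n+1}$ on the new midpoints by the explicit $\bigl(\tfrac25,\tfrac25,\tfrac15\bigr)$ rule and then verifies by direct computation that the result is harmonic on $\mathcal{S}_{n+1}$ (checking the mean-value property both at old vertices and at midpoints); level-$n$ uniqueness then does the rest. You instead get existence and uniqueness at each fixed level abstractly (trivial kernel of the square system at interior vertices, hence invertibility) and prove a restriction lemma: harmonicity at the midpoints \emph{forces} the $\bigl(\tfrac25,\tfrac25,\tfrac15\bigr)$ values, and substituting these into the condition at an old interior vertex recovers exactly the level-$n$ condition, so restriction carries harmonic functions to harmonic functions and the $f_n$ cohere. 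The computations are the same $3\times 3$ local system either way, but your packaging derives the subdivision rule instead of positing it and turns the compatibility of levels into an explicit structural statement, whereas the paper's version is more economical, never needing level-$n$ existence as a separate abstract fact. The only point you leave slightly implicit is the passage from the decay of \emph{corner} oscillations to the decay of the oscillation of $f$ over all of $\mathcal{S}_\infty$ inside a cell, which requires one more appeal to the maximum principle; the paper invokes it at the same spot, so this is a presentational omission rather than a gap.
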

\begin{proof}
To check this, we will explicitly construct, by induction, the family of functions that verify these conditions on successive approximations of $\mathcal{S}$.

First we define the (obviously unique) harmonic function on $\mathcal{S}_0 = \{0, 1, \omega\}$ that satisfies the boundary conditions:
\[f_0:\quad 0 \mapsto a,\;  1 \mapsto b,\;  \omega \mapsto c.\]

\begin{figure}
\centering
\includegraphics{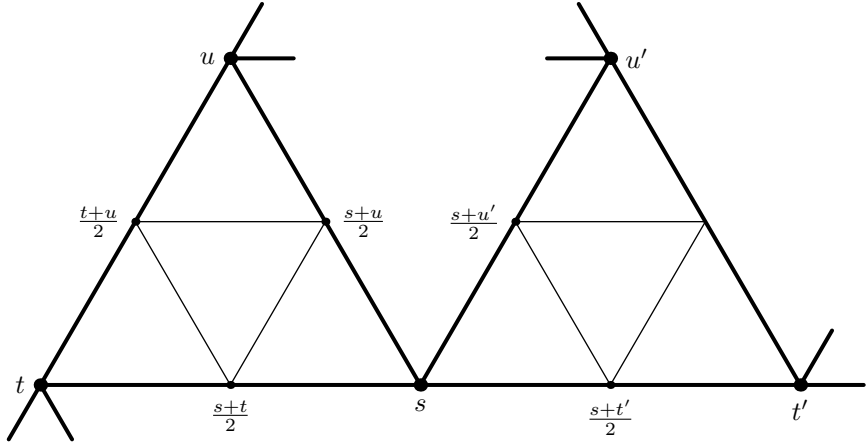}
\caption{Subdivision of $\mathcal{S}_n$.}
\label{fig:subdiv}
\end{figure}

Now suppose we have a harmonic function $f_n: \mathcal{S}_n \to E$ satisfying the boundary conditions. We will now define a function $f_{n+1}$ that extends $f_n$ to $\mathcal{S}_{n+1}$. Let $s, t, u$ be any three adjacent vertices of $\mathcal{S}_n$ that form a triangle having the same orientation as $\mathcal{S}_0$. Then $\mathcal{S}_{n+1}$ still contains these vertices, but also the vertices $\frac{s+t}{2}, \frac{t+u}{2}, \frac{s+u}{2}$ (see Figure \ref{fig:subdiv}). Conversely, it is easy to see that all vertices of $\mathcal{S}_{n+1}$ can be accounted for in this way. We then set
\begin{align*}
f_{n+1}\left(\tfrac{s+t}{2}\right) &= \frac{2f_n(s) + 2f_n(t) + f_n(u)}{5} \\
f_{n+1}\left(\tfrac{s+u}{2}\right) &= \frac{2f_n(s) + f_n(t) + 2f_n(u)}{5} \\
f_{n+1}\left(\tfrac{t+u}{2}\right) &= \frac{f_n(s) + 2f_n(t) + 2f_n(u)}{5} \\
\end{align*}
Being an extension of $f_n$, this function still satisfies the boundary conditions. Checking that it is harmonic on $\mathcal{S}_{n+1}$ is a straightforward calculation; it is enough to do it in $s$ and in $\frac{s+t}{2}$, since any vertex is similar to one of those two.
\begin{itemize}
\item The neighbours of $s$ in $\mathcal{S}_{n+1}$ are $\frac{s+t}{2}$, $\frac{s+u}{2}$, $\frac{s+t'}{2}$ and $\frac{s+u'}{2}$, where $t'$ and $u'$ are the two other neighbours of $s$ in $\mathcal{S}_n$ (see Figure \ref{fig:subdiv}). We have:
\begin{align*}
&f_{n+1}(\tfrac{s+t}{2}) + f_{n+1}(\tfrac{s+u}{2}) + f_{n+1}(\tfrac{s+t'}{2}) + f_{n+1}(\tfrac{s+u'}{2}) = \\
\\
&= \frac{2f_n(s) + 2f_n(t) + f_n(u)}{5}\; +\; \frac{2f_n(s) + f_n(t) + 2f_n(u)}{5}\; +\\
&\qquad +\; \frac{2f_n(s) + 2f_n(t') + f_n(u')}{5}\; +\; \frac{2f_n(s) + f_n(t') + 2f_n(u')}{5} \\
&= \frac{8f_n(s)\; +\; 3\big(f_n(t) + f_n(u) + f_n(t') + f_n(u')\big)}{5} \\
&= \frac{8+12}{5}f_n(s) \\
&= 4f_{n+1}(s) \\
\end{align*}
\item The neighbours of $\frac{s+t}{2}$ are $s$, $\frac{s+u}{2}$, $\frac{t+u}{2}$ and $t$. We have:
\begin{align*}
&f_{n+1}(s) + f_{n+1}(\tfrac{s+u}{2}) + f_{n+1}(\tfrac{t+u}{2}) + f_{n+1}(t) = \\
\\
&= f_n(s) + \frac{2f_n(s) + f_n(t) + 2f_n(u)}{5} + \frac{f_n(s) + 2f_n(t) + 2f_n(u)}{5} + f_n(t) \\
&= \frac{8f_n(s) + 8f_n(t) + 4f_n(u)}{5} \\
&= 4f_{n+1}(\tfrac{s+t}{2}) \\
\end{align*}
\end{itemize}
Now let us show that these functions are unique. Indeed, consider two harmonic functions on $\mathcal{S}_n$ that satisfy the same boundary conditions. This means that their difference vanishes at $0$, $1$ and $\omega$, and is harmonic as well. Then the maximum principle guarantees that it is identically equal to zero.

This allows us to define $\trifun{f}{a}{c}{b}$ on $\mathcal{S}_\infty$, and the above calculation gives us an explicit recurrence relation to compute it in any point. Let us rewrite this relation in a more convenient way:
\begin{equation}\label{eq:deffacb}
\begin{cases}
\trifun{f}{a}{c}{b} \circ h_0 =      \trifun{f}{a                }{\frac{2a+b+2c}{5}}{\frac{2a+2b+c}{5}} \\
\\
\trifun{f}{a}{c}{b} \circ h_1 =      \trifun{f}{\frac{2a+2b+c}{5}}{\frac{a+2b+2c}{5}}{b                } \\
\\
\trifun{f}{a}{c}{b} \circ h_\omega = \trifun{f}{\frac{2a+b+2c}{5}}{c                }{\frac{a+2b+2c}{5}}. \\
\end{cases}
\end{equation}
To extend this function continuously to the whole Sierpinski triangle, we need to check that it is uniformly continuous. Let
\[D(a, b, c)\; :=\; \maxof(\|a-b\|,\; \|b-c\|,\; \|a-c\|).\]
By the maximum principle, we have, for all values of $a, b, c$ and for all $s, t \in \mathcal{S}_\infty$,
\[\|\trifun{f}{a}{c}{b}(s) - \trifun{f}{a}{c}{b}(t)\|\; \leq\; D(a, b, c).\]
On the other hand, it is easy to check that we have
\[D(a, \tfrac{2a+b+2c}{5}, \tfrac{2a+2b+c}{5})\; \leq\;
\tfrac{3}{5}D(a, b, c).\]
Using \eqref{eq:deffacb}, by induction, il follows that if $T$ is an ``elementary triangle'' of level $n$, we have
\[\forall s, t \in T,\quad   \|\trifun{f}{a}{c}{b}(s) - \trifun{f}{a}{c}{b}(t)\|\; \leq\; {(\tfrac{3}{5})}^n C,\]
where $C$ depends only on $a$, $b$ and $c$. Now consider two points of $\mathcal{S}_\infty$ whose distance is at most ${(\tfrac{1}{2})}^n$; then it is easy to see that the respective elementary triangles of level $n-1$ that contain them are either coincident or adjacent. It follows that
\[\forall s, t \in \mathcal{S}_\infty,\quad   |s - t|\; \leq\; {(\tfrac{1}{2})}^n \implies \|\trifun{f}{a}{c}{b}(s) - \trifun{f}{a}{c}{b}(t)\|\; \leq\; {(\tfrac{3}{5})}^n C',\]
where $C'$ is still some positive real constant that depends only on $a$, $b$ and $c$ --- which leads to the conclusion. This shows that $\trifun{f}{a}{c}{b}$ is indeed well-defined on $\mathcal{S}$.
\end{proof}
Very informally, $\trifun{f}{a}{c}{b}$ describes the shape that a Sierpinski triangle made of rubber would assume if it were stretched between three nails fixed in $a$, $b$ and $c$ (see Figure \ref{fig:harmsierp}).

\begin{figure}
\noindent\makebox[\textwidth]{\includegraphics{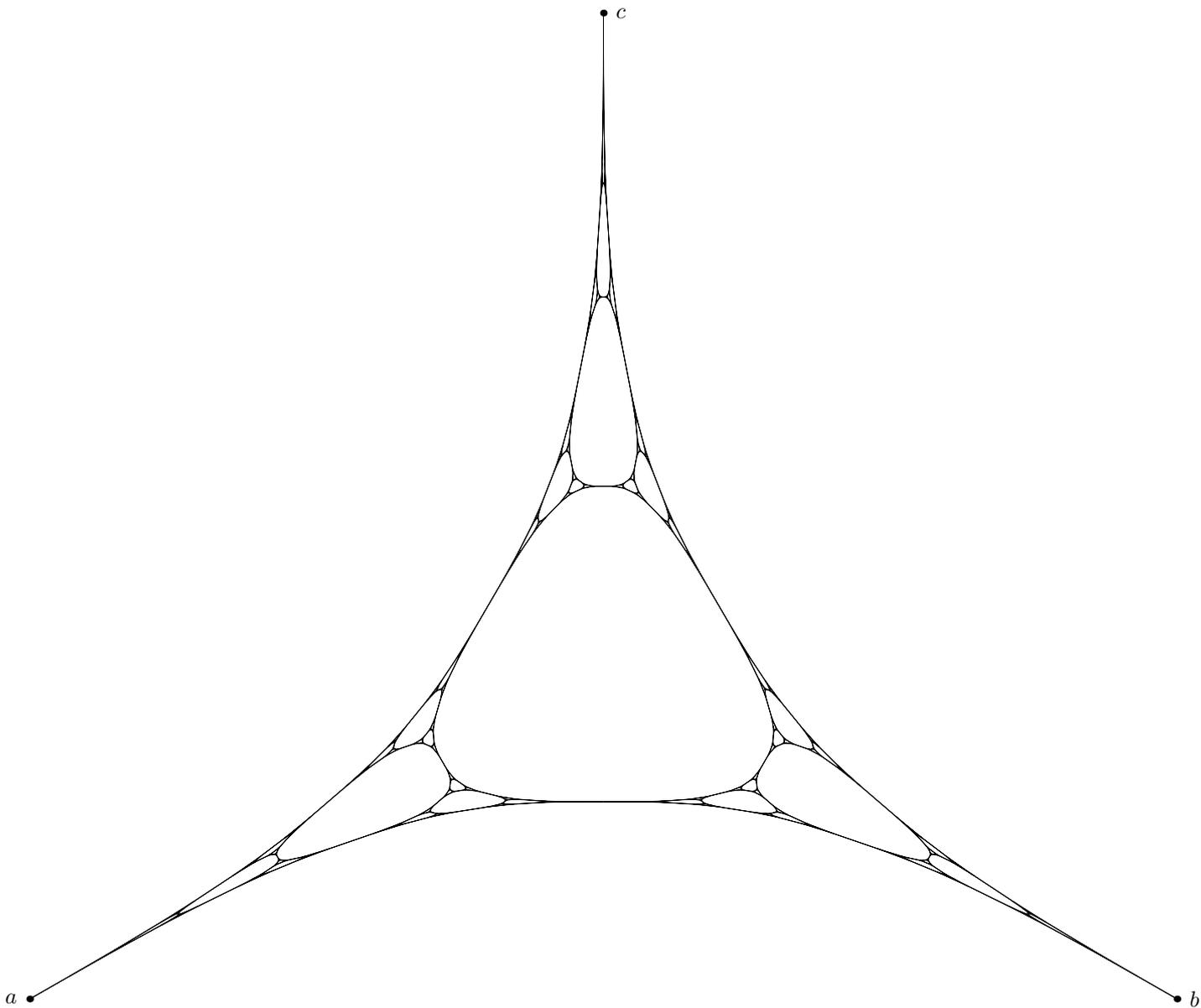}}
\caption{Harmonic image of the Sierpinski triangle, $\trifun{f}{a}{c}{b}(\mathcal{S})$.}
\label{fig:harmsierp}
\end{figure}

Note that by continuity, the relations \eqref{eq:deffacb} are still valid in $\mathcal{S}$. They are very important: they supply an intrinsic definition of the functions $\trifun{f}{a}{c}{b}$.

From now on, following Kirillov, we restrict ourselves to a side of the Sierpinski triangle: we let $\trifun{u}{a}{c}{b} = {\left. \trifun{f}{a}{c}{b} \right|}_{[0,1]}$. We may then drop the third line from \eqref{eq:deffacb}. Moreover, Kirillov is mainly interested in real-valued functions, in particular four of them: $\phi = \trifun{u}{0}{0}{1}$, $\psi = \trifun{u}{0}{1}{1}$, $\chi = \trifun{u}{0}{-1}{1}$ and $\xi = \trifun{u}{0}{2}{1}$ (in his notations, $u$ actually stands for some one function among these four). However, they have a drawback: it is difficult to study one of this functions independently from others, because \eqref{eq:deffacb} forces us to change the values of $a$, $b$ and $c$.

This leads us to introduce the function $\trifun{u}{e_0}{e_\omega}{e_1}$, where $(e_0,e_1,e_\omega)$ is the canonical basis of $\mathbb{R}^3$. Its main interest is that it contains all the information about all the functions $\trifun{u}{a}{c}{b}$. Indeed, for all $a$, $b$ and $c$, we have $\trifun{u}{a}{c}{b} = \Phi \circ \trifun{u}{e_0}{e_\omega}{e_1}$, where $\Phi:\;\mathbb{R}^3 \to E$ is the linear map that send the canonical basis to $(a,b,c)$. (If $a$, $b$ and $c$ are real, it is simply the linear form with matrix $\begin{pmatrix}a & b & c\end{pmatrix}$.) The relations \eqref{eq:deffacb} then give us a functional equation on $\trifun{u}{e_0}{e_\omega}{e_1}$ --- which is none other than \eqref{eq:recrelu}. Thus we see that the function $u$ that we have defined in the beginning is just a shorter notation (that we shall adopt from now on) for $\trifun{u}{e_0}{e_\omega}{e_1}$. Its existence and unicity follow from the proof of Proposition-Definition \ref{Definition of fabc}.

\section{General properties of $u$}
\label{sec:general}

Note that all that we shall say in this section can very easily be generalized to all of $\mathcal{S}$, that is to $\trifun{f}{e_0}{e_\omega}{e_1}$. To do this, it is enough to introduce the matrix $M_\omega$, analogous to $M_0$ and $M_1$, and to see that a generic point of $\mathcal{S}$ can be described by an infinite sequence of symbols $0$, $1$ or $\omega$, by analogy with the binary expansion.

We will need to diagonalise the $M_i$. Any of these matrices has eigenvalues $1$, $\frac{3}{5}$ and $\frac{1}{5}$, with respective eigenvectors $e_i$, $\vec{v}_i$ and $\vec{w}_i$, where:
\begin{align*}
e_0 = \begin{pmatrix}1\\0\\0\end{pmatrix},\quad
\vec{v}_0 = \begin{pmatrix}-1\\\frac{1}{2}\\\frac{1}{2}\end{pmatrix},\quad
\vec{w}_0 = \begin{pmatrix}0\\\frac{1}{2}\\-\frac{1}{2}\end{pmatrix}, \\
\\
e_1 = \begin{pmatrix}0\\1\\0\end{pmatrix},\quad
\vec{v}_1 = \begin{pmatrix}-\frac{1}{2}\\1\\-\frac{1}{2}\end{pmatrix},\quad
\vec{w}_1 = \begin{pmatrix}-\frac{1}{2}\\0\\\frac{1}{2}\end{pmatrix}.
\end{align*}

Note that
\[\trifun{f}{1}{0}{0} + \trifun{f}{0}{0}{1} + \trifun{f}{0}{1}{0} = \trifun{f}{1}{1}{1} \equiv 1,\]
which shows that our picture is in fact only two-dimensionnal: all the values of $\trifun{f}{e_0}{e_\omega}{e_1}$, hence also all the values of $u$, actually lie in the affine plane
\[\mathcal{H} := \setsuch{(x, y, z) \in \mathbb{R}^3}{x+y+z = 1}.\]
Of course, it is stable by the maps $M_i$, and the latter induce some affine maps on the former (see Figure \ref{fig:mi}). In fact, this is why we have chosen to write $e_i$ instead of $\vec{e}_i$: the $e_i$ are elements of the affine plane $\mathcal{H}$, namely --- under this interpretation --- fixed points of the affine maps $M_i$.

\begin{figure}[!h]
\centering
\includegraphics{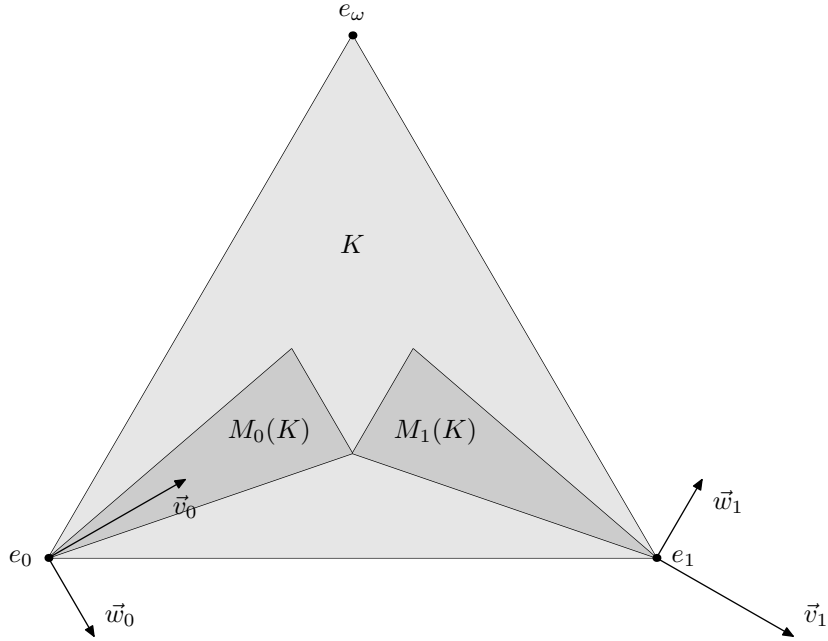}
\caption{Eigenvectors of the operators $M_i$; the plane of the sheet of paper corresponds to the affine plane $\mathcal{H}$. Thus the origin of $\mathbb{R}^3$ lies beneath the centre of the figure. The triangle $K$ is defined a little bit further; it is added here to better illustrate the action of the $M_i$ on $\mathcal{H}$.}
\label{fig:mi}
\end{figure}

Let us now try to write down an explicit formula for $u(s)$. We first need to express $s$ in terms of $0$, of $1$ and of the $h_i$, which may be done by writing:
\[s = \lim_{n \to \infty}h_{a_1 \ldots a_n}(0) = \lim_{n \to \infty}h_{a_1 \ldots a_n}(1),\]
where $s = 0,a_1a_2\ldots$ is a (lower or upper) binary expansion of $s$. Using the above relations, we get the following:
\begin{prop}
\label{u general formula}
Let $K := \mathcal{H} \cap \mathbb{R}_{\geq 0}^3$ be the (full) triangle with the canonical coordinate vectors as vertices. Then for all $s \in [0, 1]$ and for any initial value $u_0 \in K$, we have
\begin{equation}\label{eq:formulau}
u(s) = \lim_{n \to \infty}M_{a_1 \ldots a_n}(u_0),
\end{equation}
where $s = 0.a_1a_2 \ldots$ is its (lower or upper) binary expansion.
\end{prop}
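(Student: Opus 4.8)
The plan is to reduce the statement to the single, almost immediate, special case $u_0 = e_0$, and then to absorb the dependence on $u_0$ into a contraction estimate on the direction plane of $\mathcal{H}$.

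First I would treat $u_0 = e_0$. Writing $s = 0.a_1a_2\ldots$, the closed form $h_{a_1 \ldots a_n}(w) = 2^{-n}w + \sum_{k=1}^{n} a_k 2^{-k}$ shows that $h_{a_1 \ldots a_n}(0) \to s$. Iterating the two relations $u \circ h_i = M_i \circ u$ gives $u \circ h_{a_1 \ldots a_n} = M_{a_1 \ldots a_n} \circ u$, so evaluating at $0$ and using $u(0) = e_0$ yields
\[ M_{a_1 \ldots a_n}(e_0) = u\bigl(h_{a_1 \ldots a_n}(0)\bigr). \]
Since $u$ is continuous (Proposition-definition \ref{Definition of fabc}), the right-hand side tends to $u(s)$. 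This already proves \eqref{eq:formulau} for $u_0 = e_0$, and in particular shows that the limit exists in that case.

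To pass to an arbitrary $u_0 \in K$, I would compare with the previous case using linearity of the $M_i$:
\[ M_{a_1 \ldots a_n}(u_0) = M_{a_1 \ldots a_n}(e_0) + M_{a_1 \ldots a_n}(u_0 - e_0). \]
Both $u_0$ and $e_0$ lie in the affine plane $\mathcal{H}$, so $u_0 - e_0$ lies in the direction space $V := \setsuch{(x,y,z)}{x+y+z = 0}$, which is spanned by the eigenvectors $\vec{v}_i, \vec{w}_i$ of $M_i$ for the eigenvalues $\tfrac{3}{5}$ and $\tfrac{1}{5}$. Thus each $M_i$ restricts to an endomorphism of $V$ of spectral radius $\tfrac{3}{5} < 1$. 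The goal is then to show $M_{a_1 \ldots a_n}(u_0 - e_0) \to 0$, after which summing the two displayed limits gives $M_{a_1 \ldots a_n}(u_0) \to u(s)$, as claimed (and simultaneously settles existence of the limit for every $u_0$).

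The main obstacle is precisely this decay statement. Having spectral radius $< 1$ for each $M_i|_V$ does not by itself suffice, since a product of contractions indexed by the digits $a_k$ need not shrink: what is needed is that both restrictions contract in one and the same norm, i.e. that their joint spectral radius is $< 1$. I would settle this by exhibiting such a norm explicitly. In the basis $p = (1,-1,0)$, $q = (1,0,-1)$ of $V$ a short computation gives
\[ M_0|_V = \begin{pmatrix} \tfrac{2}{5} & \tfrac{1}{5} \\ \tfrac{1}{5} & \tfrac{2}{5} \end{pmatrix}, \qquad M_1|_V = \begin{pmatrix} \tfrac{3}{5} & 0 \\ -\tfrac{1}{5} & \tfrac{1}{5} \end{pmatrix}, \]
and one checks that, for the Euclidean norm attached to this basis, the operator norms of both matrices are bounded by some $\rho < 1$. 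Hence $\|M_{a_1 \ldots a_n}|_V\| \le \rho^{\,n} \to 0$, which kills the second term uniformly in $u_0 \in K$ (indeed uniformly on the bounded set $K - e_0$). I would finally remark that the same contraction shows the nested triangles $M_{a_1 \ldots a_n}(K)$ shrink to the single point $u(s)$, which is the geometric content of the statement.
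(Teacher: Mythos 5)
Your proof is correct, and its overall structure is the same as the paper's: establish the case $u_0 = e_0$ from the functional equation $u \circ h_w = M_w \circ u$ plus continuity, then show the limit does not depend on the choice of $u_0 \in K$ via a contraction argument. The only real difference is in how the contraction is justified. The paper simply recycles the estimate already proved inside Proposition-definition \ref{Definition of fabc} (the inequality $D(a, \tfrac{2a+b+2c}{5}, \tfrac{2a+2b+c}{5}) \leq \tfrac{3}{5} D(a,b,c)$, which says exactly that the diameters of the nested triangles $M_{a_1 \ldots a_n}(K)$ decay like $(\tfrac{3}{5})^n$), and concludes by intersecting a decreasing sequence of compacta. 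You instead work directly on the difference $M_{a_1 \ldots a_n}(u_0 - e_0)$ in the direction plane and exhibit an explicit common operator norm: your matrices for $M_0|_V$ and $M_1|_V$ in the basis $p=(1,-1,0)$, $q=(1,0,-1)$ are right, and the Euclidean operator norms are $\tfrac{3}{5}$ and $\sqrt{(11+\sqrt{85})/50} \approx 0.636$ respectively, so $\rho = 0.64$ works. You are also right to flag that spectral radius $<1$ for each factor separately would not suffice and that a common submultiplicative norm is needed --- this is the one point where a naive argument could go wrong, and your explicit check closes it. The self-contained norm computation is slightly more work than the paper's citation, but it makes the proposition independent of the quantitative details of the earlier existence proof.
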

(See Figure \ref{fig:ut}.)

\begin{figure}
\centering
\includegraphics{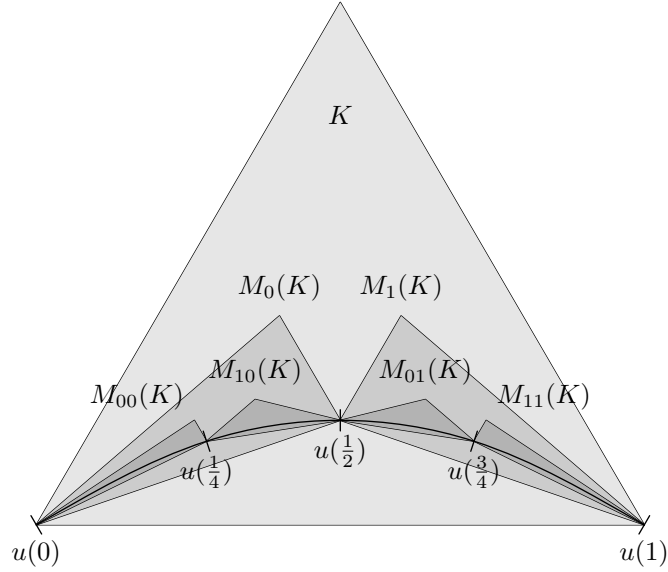}
\caption{Illustration of Proposition \ref{u general formula}. The curve that runs through the picture is the parametric curve $u(t)$.}
\label{fig:ut}
\end{figure}

\begin{proof}
Let us first rewrite this formula in such a way as to make it clear that the result does not depend on the initial value. It is easy to check that $K$ is stable by both $M_0$ and $M_1$: thus the sequence $M_{a_1 \ldots a_n}(K)$ is decreasing. In the proof of Proposition-Definition \ref{Definition of fabc}, we have already seen that the diameters of these sets tend to 0. It is clear that they are closed. Thus their intersection contains a single point:
\[\forall u_0 \in K,\quad  
\left\{ \lim_{n \to \infty}M_{a_1 \ldots a_n}(u_0) \right\} = \bigcap_{n=0}^{\infty}M_{a_1 \ldots a_n}(K).\]
On the other hand, a trivial induction shows that for any finite word $w$ on the alphabet $\{0, 1\}$, $h_{w}(0) = 0.w$, where $0.w$ is seen as a binary fraction, and that $u \circ h_{w} = M_{w} \circ u$. Hence, by the relations \eqref{eq:recrelu} and by continuity of $u$,
\[u\left(\sum_{i=1}^{\infty}a_i2^{-i}\right)
= \lim_{n \to \infty}M_{a_1 \ldots a_n} \begin{pmatrix}1\\0\\0\end{pmatrix},\]
and hence the formula.\end{proof}

\begin{cor}
\label{u injective}
$u$ is injective.
\end{cor}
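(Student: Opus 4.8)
The plan is to exploit the self-similar description of $u$ from Proposition \ref{u general formula}, reducing injectivity to two geometric facts: that the two sub-triangles $M_0(K)$ and $M_1(K)$ overlap in a single point, and that the endpoints $e_0, e_1$ of the image have no preimages other than $0$ and $1$. Throughout I use that $u([0,1]) \subseteq K$, which is immediate from Proposition \ref{u general formula} since $u(s)$ lies in the nested intersection of the sets $M_{a_1 \ldots a_n}(K) \subseteq K$.

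First I would establish the separation lemma: $M_0(K) \cap M_1(K) = \{q\}$, where $q := \tfrac{1}{5}(2,2,1)$. A direct computation gives $q = M_0 e_1 = M_1 e_0$, so $q$ is a common vertex of the two triangles; the content of the lemma is that there are no other common points. To prove this I would use the linear functional $\ell(x,y,z) = x - y$ as a separator. Evaluating $\ell$ at the three vertices of each triangle, I expect $\ell \geq 0$ on $M_0(K)$, with values $1$ at $e_0$, $\tfrac{1}{5}$ at $M_0 e_\omega = \tfrac15(2,1,2)$, and $0$ at $q$; and symmetrically $\ell \leq 0$ on $M_1(K)$, with equality only at $q$. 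Since $\ell$ is affine and strictly positive at two of the three vertices of $M_0(K)$, it is strictly positive on the whole triangle except at $q$ (any other point is a convex combination giving positive weight to a strictly positive vertex), and likewise $\ell < 0$ on $M_1(K) \setminus \{q\}$. Hence a common point must satisfy $\ell = 0$ from both sides, which forces it to be $q$.

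Next I would show $u^{-1}(e_0) = \{0\}$ (and, by the symmetry $0 \leftrightarrow 1$, $u^{-1}(e_1) = \{1\}$). Writing $x(s)$ for the first coordinate of $u(s)$, the relations \eqref{eq:recrelu} together with $x + y + z = 1$ on $\mathcal{H}$ give $x(h_0(s)) = \tfrac{3x(s)+2}{5}$ on the left half, while on the right half the first coordinate equals $\tfrac{1}{5}(2x+z)$, which never reaches $1$ since $x, z \in [0,1]$ force $2x + z \leq 3 < 5$. Thus if $x(s) = 1$ with $s \neq 0$, then necessarily $s \in (0, \tfrac12]$ and $x(2s) = 1$ with $2s \in (0,1]$. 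Iterating the doubling map $s \mapsto 2s$ preserves the value $1$ but, since $s > 0$, eventually pushes the argument into $(\tfrac12, 1]$, where $x < 1$ --- a contradiction. Hence $x(s) = 1$ only at $s = 0$, i.e.\ $u^{-1}(e_0) = \{0\}$.

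Finally I would assemble the argument. Suppose $s < t$ with $u(s) = u(t) = p$. For a finite word $w$, the dyadic interval $\overline{I_w} = h_w([0,1])$ satisfies $u(\overline{I_w}) = M_w(u([0,1])) \subseteq M_w(K)$. Let $w$ be the longest word with $s, t \in \overline{I_w}$ (such a maximal length exists since $s \neq t$ and the intervals shrink to points); then $s \in \overline{I_{w0}}$ and $t \in \overline{I_{w1}}$, so $M_w^{-1}p \in M_0(K) \cap M_1(K)$, which equals $q$ by the separation lemma. Writing $s = h_{w0}(\sigma)$ with $\sigma \in [0,1]$, the relation $u \circ h_{w0} = M_{w0} \circ u$ gives $u(\sigma) = M_0^{-1} M_w^{-1} p = M_0^{-1} q = e_1$, so $\sigma = 1$ by the endpoint fact, whence $s = h_{w0}(1)$ is the midpoint of $I_w$; symmetrically $M_1^{-1} q = e_0$ forces $t$ to equal the same midpoint, contradicting $s < t$. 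The main obstacle I anticipate is the separation lemma --- pinning down a functional that cleanly certifies that the two sub-triangles meet only at their shared vertex --- together with the bookkeeping forced by the two binary expansions of dyadic points, which the dyadic-interval formulation above is designed to sidestep.
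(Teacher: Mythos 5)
Your proof is correct, and it turns on the same pivot as the paper's own argument: the fact that $M_0(K)$ and $M_1(K)$ meet in the single point $q=\tfrac{1}{5}(2,2,1)=M_0(e_1)=M_1(e_0)$. The packaging differs in two respects worth noting. First, where the paper dismisses this overlap lemma with ``a quick calculation (or a quick look at Figure~\ref{fig:ut})'', you supply an explicit certificate --- the functional $\ell(x,y,z)=x-y$, strictly positive on $M_0(K)\setminus\{q\}$ and strictly negative on $M_1(K)\setminus\{q\}$ because it is affine and vanishes at only one vertex of each triangle --- which makes that step fully rigorous. Second, the endgame: the paper compares the nested intersections $\bigcap_n M_{a_1\ldots a_n}(K)$ attached to two binary expansions and reduces to the claim that such an intersection equals $\{e_1\}$ (resp.\ $\{e_0\}$) if and only if the tail is all ones (resp.\ all zeros), a claim it leaves to the reader; you instead isolate the equivalent statement $u^{-1}(e_0)=\{0\}$ and $u^{-1}(e_1)=\{1\}$ and prove it by tracking the first coordinate under the doubling map, using $x(h_0(s))=\tfrac{3x(s)+2}{5}$ together with the bound $x\leq\tfrac{3}{5}$ on $[\tfrac12,1]$. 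Your maximal-dyadic-interval bookkeeping (forcing both $s$ and $t$ to be the midpoint $h_w(\tfrac12)$, contradicting $s<t$) cleanly sidesteps the two-expansions ambiguity that the paper confronts head-on. Both routes are sound; yours is a little longer but more self-contained, while the paper's version yields as a byproduct the exact characterization of when two binary sequences produce the same value, which it uses to corroborate the consistency of the two expansions of a dyadic rational.
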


\begin{proof}Let $(a_i)_{i \geq 1}$ and $(a'_i)_{i \geq 1}$ be two sequences with values in $\{0, 1\}$. Let $k$ be the first index where these sequences diverge, i. e. suppose $a_1 = a'_1,\; \ldots,\; a_{k-1} = a'_{k-1},\; a_k \neq a'_k$. Without loss of generality, we may actually suppose that $a_k = 0$ and $a'_k = 1$. Choose some $u_0 \in K$. Then the following statements are equivalent (remember that the $M_i$ are bijective):
\begin{align*}
\lim_{n \to \infty}M_{a_1 \ldots a_n}(u_0) &= \lim_{n \to \infty}M_{a'_1 \ldots a'_n}(u_0)\\
\bigcap_{n=0}^{\infty}M_{a_1 \ldots a_n}(K) &= \bigcap_{n=0}^{\infty}M_{a'_1 \ldots a'_n}(K)\\
\bigcap_{n=k}^{\infty}M_{0 a_{k+1} \ldots a_n}(K) &= \bigcap_{n=k}^{\infty}M_{0 a'_{k+1} \ldots a'_n}(K)\\
\end{align*}
On the last line, each side is a subset of the respective $M_i(K)$. But a quick calculation (or a quick look at Figure \ref{fig:ut}) shows that the two images of the triangle intersect at a single point:
\[M_0(K) \cap M_1(K) = \left\{\frac{1}{5}\begin{pmatrix}2\\2\\1\end{pmatrix}\right\}
= \{M_0(e_1)\} = \{M_1(e_0)\}.\]
Thus the previous equality holds if and only if
\[
\begin{cases}
\displaystyle \bigcap_{n=k}^{\infty}M_{a_{k+1} \ldots a_n}(K) = \{e_1\} \\
\displaystyle \bigcap_{n=k}^{\infty}M_{a'_{k+1} \ldots a'_n}(K) = \{e_0\}, \\
\end{cases}
\]
which is true if and only if for all $i > k$, $a_i = 1$ and $a'_i = 0$. In other words, both sequences yield the same value if and only if we have $\sum_{i=1}^{\infty}a_i2^{-i} = \sum_{i=1}^{\infty}a'_i2^{-i}$. The ``only if'' part proves our claim, and the ``if'' part offers additional support for the equivalence of different binary expansions.
\end{proof}

\section{Local behavior of $u$: existence and direction of tangent}
\label{sec:direction}

We will now study the behaviour of the vector $u(t) - u(s)$ ad $t \to s$. In this section, we will thoroughly describe the behaviour of the norm of this vector: quantitatively through the formula \eqref{eq:directionformula}, and qualitatively in Corollary \ref{properties of Du}.

The difference $u(t) - u(s)$ lives in $\vec{\mathcal{H}} := \setsuch{(x, y, z) \in \mathbb{R}^3}{x+y+z = 0}$, the vector plane parallel to the affine plane $\mathcal{H}$. This vector plane is of course stable by $M_0$ and $M_1$; it is thus natural to introduce the restrictions of these operators to $\vec{\mathcal{H}}$, that we shall call $\vec{M}_0$ and $\vec{M}_1$. Each $\vec{M}_i$ has eigenvalues $\frac{3}{5}$ and $\frac{1}{5}$, with respective eigenvectors $\vec{v}_i$ and $\vec{w}_i$. Note that in both cases, $\vec{v}_i$ and $\vec{w}_i$ actually form an orthogonal basis of $\mathcal{H}$.

\begin{figure}
\centering
\includegraphics{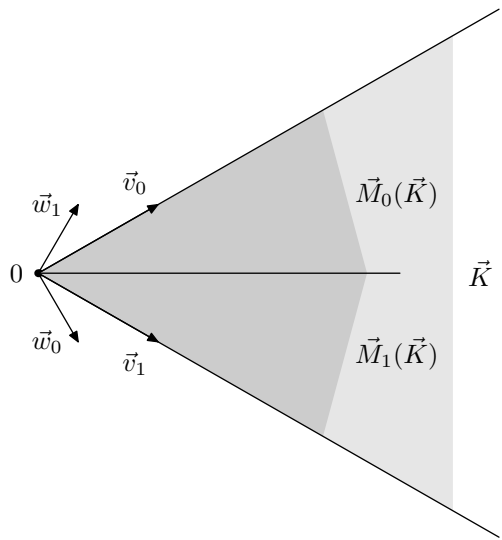}
\caption{Main objects of the vector plane $\vec{\mathcal{H}}$.}
\label{fig:vech}
\end{figure}

Let us now introduce the set
\[\vec{K} := \setsuch{a\vec{v}_0 + b\vec{v}_1}{a, b \geq 0 \text{ and } (a, b) \neq (0, 0)}\]
(see Figure \ref{fig:vech}.) This notation should not lead the reader to think that this set is somehow derived from $K$: it is not, at least not directly. But it does play an analogous role, and the following lemma will show precisely in what sense:

\begin{lem}
\label{K stable by m}
The set $\vec{K}$ is stable by the $\vec{M}_i$. For all $s, t \in [0, 1]$ such that $s < t$, $u(t) - u(s) \in \vec{K}$.
\end{lem}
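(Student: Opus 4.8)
The plan is to treat the two assertions separately, deriving the statement about $u(t)-u(s)$ from the stability claim together with the self-similar structure of $u$.

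First I would prove stability by a direct computation in the basis $(\vec{v}_0,\vec{v}_1)$ of $\vec{\mathcal{H}}$. Since $\vec{M}_0\vec{v}_0 = \frac{3}{5}\vec{v}_0$, the only thing to determine is the image of $\vec{v}_1$; the computation gives $\vec{M}_0\vec{v}_1 = \frac{1}{5}(\vec{v}_0+\vec{v}_1)$, and symmetrically $\vec{M}_1\vec{v}_0 = \frac{1}{5}(\vec{v}_0+\vec{v}_1)$ while $\vec{M}_1\vec{v}_1 = \frac{3}{5}\vec{v}_1$. Hence $\vec{M}_0(a\vec{v}_0+b\vec{v}_1) = (\frac{3}{5}a+\frac{1}{5}b)\vec{v}_0 + \frac{1}{5}b\,\vec{v}_1$ and $\vec{M}_1(a\vec{v}_0+b\vec{v}_1) = \frac{1}{5}a\,\vec{v}_0 + (\frac{1}{5}a+\frac{3}{5}b)\vec{v}_1$. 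In both cases the new coordinates are nonnegative whenever $a,b\geq 0$, and they vanish simultaneously only when $a=b=0$; so $\vec{K}$ is stable under each $\vec{M}_i$, and therefore under every product $\vec{M}_w$.

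Next I would establish the single base case $u(1)-u(0)\in\vec{K}$. As $u(0)=e_0$ and $u(1)=e_1$, decomposing $e_1-e_0$ in the basis $(\vec{v}_0,\vec{v}_1)$ yields $u(1)-u(0)=\frac{2}{3}(\vec{v}_0+\vec{v}_1)\in\vec{K}$. For two consecutive dyadic points of level $n$, say $s=k2^{-n}$ and $t=(k+1)2^{-n}$, the interval $[s,t]$ is exactly $h_w([0,1])$ for the length-$n$ word $w$ representing $k$, so $s=h_w(0)$ and $t=h_w(1)$. Iterating $u\circ h_i = M_i\circ u$ from \eqref{eq:recrelu} and passing to the linear part on $\vec{\mathcal{H}}$ gives $u(t)-u(s)=\vec{M}_w\bigl(u(1)-u(0)\bigr)$, which lies in $\vec{K}$ by stability. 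To reach an arbitrary dyadic pair $s<t$, I would choose $n$ with $s,t\in 2^{-n}\mathbb{Z}$ and telescope over the consecutive level-$n$ points between them, writing $u(t)-u(s)$ as a finite sum of terms each in $\vec{K}$. Since $\vec{v}_0,\vec{v}_1$ are linearly independent, $\vec{K}\cup\{0\}$ is a pointed cone closed under addition, and such a sum again lies in $\vec{K}$: its coordinates in $(\vec{v}_0,\vec{v}_1)$ are sums of nonnegative numbers that are not all zero.

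Finally, for general $s<t$ I would approximate by dyadic $s_m\to s$, $t_m\to t$ with $s_m<t_m$, so that $u(t_m)-u(s_m)\in\vec{K}$; by continuity of $u$ the limit $u(t)-u(s)$ lies in the closed cone $\overline{\vec{K}}=\setsuch{a\vec{v}_0+b\vec{v}_1}{a,b\geq 0}$. The only thing this argument misses is that the limit could a priori be the zero vector, and this is the one genuine subtlety of the proof (the matrix computations being entirely routine): it is resolved by invoking injectivity of $u$ (Corollary \ref{u injective}), which forces $u(t)-u(s)\neq 0$ and hence places it in $\overline{\vec{K}}\setminus\{0\}=\vec{K}$.
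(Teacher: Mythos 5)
Your proof is correct, and it reorganizes the dyadic step in a way that differs from the paper's. The paper proves the second assertion by induction on the level $n$ of the dyadic points, splitting into three cases according to the position of $s$ and $t$ relative to $\frac12$ and writing, in the middle case, $u(t)-u(s)=\vec{M}_1\big(u(T)-u(0)\big)+\vec{M}_0\big(u(1)-u(S)\big)$; you instead handle a single consecutive pair $k2^{-n},(k+1)2^{-n}$ directly via $u(t)-u(s)=\vec{M}_w\big(u(1)-u(0)\big)$ and then telescope over all intermediate level-$n$ points, which replaces the induction and its case analysis by one application of self-similarity plus additivity of the cone. Both arguments rest on the same three ingredients --- stability of $\vec{K}$ under the $\vec{M}_i$, closure of $\vec{K}$ under sums (as a pointed cone), and Corollary \ref{u injective} to exclude the zero vector after passing to the limit --- so neither buys extra generality, but your version is arguably cleaner to check, and you also make explicit the matrix of $\vec{M}_i$ in the basis $(\vec{v}_0,\vec{v}_1)$ (namely $\vec{M}_0\vec{v}_1=\vec{M}_1\vec{v}_0=\frac15(\vec{v}_0+\vec{v}_1)$, which is correct), where the paper leaves the stability claim as a ``straightforward calculation''. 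You also correctly isolate the one genuine subtlety, the exclusion of $0$ from the limiting cone, exactly as the paper does.
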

\begin{proof}
The first part can be checked by a straightforward calculation (see also Figure \ref{fig:vech}).

As for the second part, let us first prove the result for dyadic rational values of $s$ and $t$ (such that $0 \leq s < t \leq 1$), by induction on their dyadic valuations. Let us write $I_n := \mathcal{S}_n \cap [0, 1] = \setsuch{\frac{k}{2^n}}{0 \leq k \leq 2^n}$, so that we have $I_0 = \{0, 1\}$ and $I_{n+1} = h_0(I_n) \cup h_1(I_n)$. If $s, t \in I_0$, then $s = 0$, $t = 1$ and the conclusion can be checked by a straightforward calculation. Now suppose that the conclusion is true for all $s, t \in I_n$; let $s < t$ be two points in $I_{n+1}$. We need to distinguish three cases: $s < t \leq \frac{1}{2}$, $s < \frac{1}{2} < t$ and $\frac{1}{2} \leq s < t$. We shall only treat the middle case, which is the hardest one; the other two are analogous. We may write $s = h_0(S)$ and $t = h_1(T)$, so that we have:
\begin{align*}
u(t) - u(s)\; &=\; u(t) - u(\tfrac{1}{2})\; +\; u(\tfrac{1}{2}) - u(s) \\
              &=\; u\big(h_1(T)\big) - u\big(h_1(0)\big)\; +\; u\big(h_0(1)\big) - u\big(h_0(S)\big) \\
              &=\; \vec{M}_1\big(u(T) - u(0)\big)\; +\; \vec{M}_0\big(u(1) - u(S)\big). \\
\end{align*}
Applying, in order, the induction hypothesis, the fact that $\vec{K}$ is stable by the $\vec{M}_i$, and the fact that $\vec{K}$ is stable under linear combination with positive coefficients (which follows immediately from the formula that defines it), we get the result.

Now let $0 \leq s < t \leq 1$ be general values. Since $u$ is continuous and since dyadic rationals are dense, $u(t) - u(s)$ lies in the closure of $\vec{K}$, which is equal to $\vec{K} \cup \{0\}$. But since, by Corollary \ref{u injective}, $u$ is injective, the difference cannot be equal to zero.
\end{proof}

We now need to introduce the projective equivalents of the relevant objects. Let $\pi: \vec{\mathcal{H}} \setminus \{0\} \to \mathbb{P}^1(\mathbb{R})$ be the canonical projection; we let $\tilde{K} := \pi(\vec{K})$ and
\begin{align*}
\tilde{M}_i:\quad \mathbb{P}^1(\mathbb{R}) &\to     \mathbb{P}^1(\mathbb{R}) \\
                  \pi(\vec{x})             &\mapsto \pi(M_i(\vec{x}))
\end{align*}
(see Figure \ref{fig:tilde}). Clearly, $\tilde{K}$ is stable by the $\tilde{M}_i$.

\begin{figure}
\centering
\includegraphics{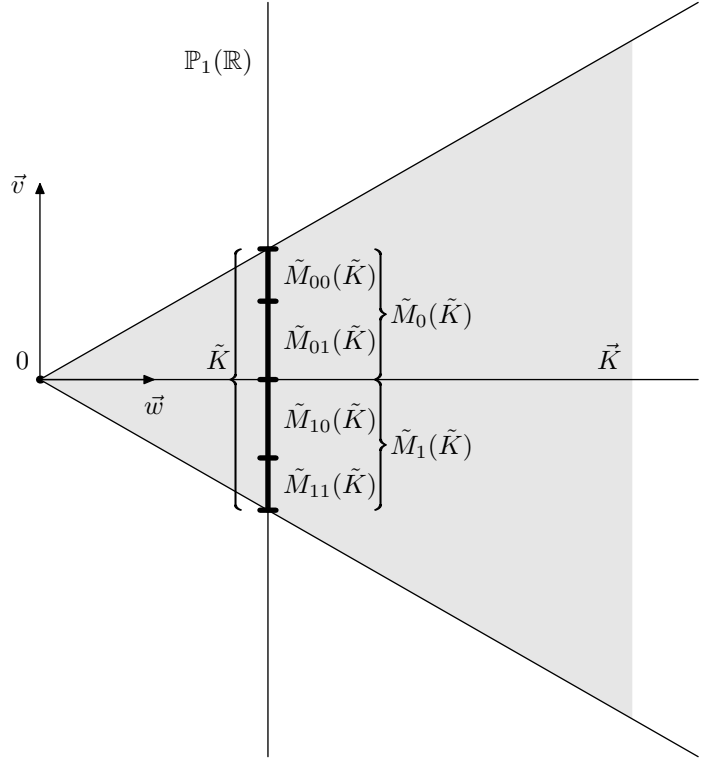}
\caption{Illustration of Proposition \ref{tangentformula}}
\label{fig:tilde}
\end{figure}

\begin{prop}
\label{tangentformula}
Let $s \in [0, 1]$; let $s = 0,a_1a_2 \ldots$ be its upper binary expansion. Then for all $\tilde{u}_0 \in \tilde{K}$, we have
\begin{equation}\label{eq:directionformula}
\lim_{\substack{t \to s \\ t > s}}\; \pi\big(u(t) - u(s)\big)
= \lim_{n \to \infty} \tilde{M}_{a_1 \ldots a_n} (\tilde{u}_0),
\end{equation}
that is, both limits always exist and are always equal.

As for lower expansions, the formula becomes:
\[\lim_{\substack{t \to s \\ t < s}}\; \pi\big(u(s) - u(t)\big)
= \lim_{n \to \infty} \tilde{M}_{a_1 \ldots a_n} (\tilde{u}_0).\]
\end{prop}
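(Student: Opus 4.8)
The plan is to reduce the entire statement --- existence of both limits, their equality, and the independence from $\tilde u_0$ --- to a single geometric fact: for every infinite word $(a_n)_{n \geq 1}$ on $\{0,1\}$, the nested sequence of compact arcs $\tilde K^{(n)} := \tilde M_{a_1 \ldots a_n}(\tilde K)$ (decreasing, because $\tilde K$ is stable by the $\tilde M_i$) has a single point as its intersection. Granting this, write $\{p\} = \bigcap_n \tilde K^{(n)}$. For any $\tilde u_0 \in \tilde K$ we have $\tilde M_{a_1 \ldots a_n}(\tilde u_0) \in \tilde K^{(n)}$, so the right-hand side of \eqref{eq:directionformula} converges to $p$, independently of $\tilde u_0$. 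It then only remains to identify the left-hand limit with $p$.

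For the left-hand side I would exploit the multiplicativity already recorded in the proof of Proposition \ref{u general formula}, namely $u \circ h_w = M_w \circ u$ for every finite word $w$. Fix $s = 0.a_1 a_2 \ldots$ (upper expansion) and let $t > s$. The key point is that, for $t$ close enough to $s$, the points $s$ and $t$ lie in a common elementary interval $[0.a_1 \ldots a_n,\; 0.a_1 \ldots a_n + 2^{-n}]$ with $n$ as large as we like; writing $s = h_{a_1 \ldots a_n}(S)$ and $t = h_{a_1 \ldots a_n}(T)$ with $S < T$ gives
\begin{equation*}
u(t) - u(s) = \vec M_{a_1 \ldots a_n}\big(u(T) - u(S)\big),
\end{equation*}
and since $u(T) - u(S) \in \vec K$ by Lemma \ref{K stable by m}, we get $\pi(u(t) - u(s)) \in \tilde M_{a_1 \ldots a_n}(\tilde K) = \tilde K^{(n)}$. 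Hence $\pi(u(t) - u(s))$ is eventually trapped in every $\tilde K^{(n)}$, so it converges to $p$ as $t \to s^+$, which simultaneously yields existence of the left limit and its equality with the right one.

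The bookkeeping behind ``$s$ and $t$ share a level-$n$ interval'' is exactly where the choice of expansion enters, and I would treat it carefully. For non-dyadic $s$ there is nothing to check: $s$ lies in the interior of each interval $[0.a_1 \ldots a_n,\; 0.a_1 \ldots a_n + 2^{-n}]$, so any $t$ close enough shares it, on either side. For dyadic $s$ the upper expansion is eventually $0$, so its prefixes $a_1 \ldots a_n$ address precisely the intervals having $s$ as their left endpoint, i.e. the ones extending to the right of $s$, which are the relevant ones for $t \to s^+$. The left-sided statement is the mirror image: the lower expansion is eventually $1$, its prefixes address the intervals having $s$ as their right endpoint, and the sign in $\pi(u(s) - u(t))$ is chosen precisely so that the difference again lands in $\vec K$.

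The real work, and the main obstacle, is the crux fact that $\bigcap_n \tilde K^{(n)}$ is a single point. This is a contraction-of-cones phenomenon: each $\tilde M_i$ is a projective contraction of $\tilde K$ toward the endpoint $\pi(\vec v_i)$. I would make it quantitative through the projective (Hilbert) metric of $\tilde K$: in the eigenbasis $(\vec v_i, \vec w_i)$ the operator $\vec M_i$ is diagonal with ratio $\tfrac{1/5}{3/5} = \tfrac13$, its attracting fixed point $\pi(\vec v_i)$ is an endpoint of $\tilde K$, and its repelling fixed point $\pi(\vec w_i)$ lies outside $\tilde K$ (a one-line check), so a cross-ratio computation shows each $\tilde M_i$ strictly contracts this metric. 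The delicate part is that the contraction is \emph{not} uniform up to the fixed endpoints, so no single common contraction factor exists; this is exactly where the strict spectral gap $\tfrac35 > \tfrac15$ must be used. Concretely I would split into two cases: if the word is eventually constant the arcs collapse onto the image of the corresponding attracting endpoint $\pi(\vec v_i)$ (attracting precisely because $\tfrac35 > \tfrac15$), whereas if both letters occur infinitely often the accumulated strict contractions force the projective diameter to $0$. In either case the intersection is a single point, which closes the argument.
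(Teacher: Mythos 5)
Your reduction and your treatment of the left-hand side are exactly the paper's: the nested compact arcs $\tilde{M}_{a_1\ldots a_n}(\tilde{K})$, the identity $u \circ h_w = M_w \circ u$ combined with Lemma \ref{K stable by m} to trap $\pi\big(u(t)-u(s)\big)$ in the $n$-th arc, and the upper/lower-expansion bookkeeping (which you spell out more carefully than the paper does). The one place where you diverge is the crux fact that the intersection of the arcs is a single point, and that is where there is a genuine gap. You correctly observe that each $\tilde{M}_i$ fixes the endpoint $\pi(\vec{v}_i)$ of $\tilde{K}$, so its image touches the boundary and no uniform contraction factor for the Hilbert metric of $\tilde{K}$ exists; but then, in the case where both letters occur infinitely often, you conclude from ``accumulated strict contractions'' that the projective diameter tends to $0$. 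That inference is invalid as stated: a composition of infinitely many strict but non-uniform contractions (with factors $1-2^{-n}$, say) need not shrink diameters to zero, and nothing in your argument actually extracts a uniform modulus from the spectral gap you invoke.

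The gap is repairable in two ways. (a) Your own route can be completed: each two-letter block $\tilde{M}_i\tilde{M}_j$ with $i \neq j$ sends $\tilde{K}$ into a compact subarc of its interior (because $\vec{M}_0\vec{v}_1 = \vec{M}_1\vec{v}_0 = \tfrac{1}{5}(\vec{v}_0+\vec{v}_1)$ lies in the open cone), hence has finite Hilbert diameter $\Delta$ and contracts the Hilbert metric by the uniform Birkhoff factor $\tanh(\Delta/4)<1$; since single letters are at least non-expansive and your hypothesis provides infinitely many such switches, the diameters do tend to $0$. (b) The paper avoids the issue altogether by not using the Hilbert metric: it reads off the projective coordinate in a fixed affine chart, setting $d\big(\pi(\vec{u}_1),\pi(\vec{u}_2)\big) = |a_1/b_1 - a_2/b_2|$ for a suitable basis $(\vec{v},\vec{w})$ of $\vec{\mathcal{H}}$, and checks by direct computation that both $\tilde{M}_0$ and $\tilde{M}_1$ are globally $\tfrac{3}{4}$-Lipschitz on $\tilde{K}$ in this metric. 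This yields a uniform contraction in one step and removes the need for your case distinction; the rest of your argument then goes through unchanged.
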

\begin{proof}
The proof is quite similar to the proof of Proposition \ref{u general formula}. Just as before, the sets $\tilde{M}_{a_1 \ldots a_n}(\tilde{K})$ are all closed and form a decreasing sequence. To ensure that their intersection contains a single point, it is enough to check that their diameters tend to 0; to check this, it is enough to show that the $\tilde{M}_i$ are contractive. In the usual metric (where the distance between two points is the angle between the corresponding lines), this is not the case: the Lipschitz constant is equal to 1. But we shall use another metric --- the one defined by $d\big(\pi(\vec{u}_1), \pi(\vec{u}_2)\big) = |\frac{a_1}{b_1} - \frac{a_2}{b_2}|$, where we let $\vec{u}_i = a_i\vec{v} + b_i\vec{w}$ with $\vec{v} = \begin{pmatrix}-\frac{1}{2}\\-\frac{1}{2}\\1\end{pmatrix}$ and $\vec{w} = \begin{pmatrix}-\frac{1}{2}\\\frac{1}{2}\\0\end{pmatrix}$. Informally, what we do is to project on a line rather than on a circle: see Figure \ref{fig:tilde}. Since there exists a neighbourhood of the point at infinity which does not intersect $\tilde{K}$, this metric is indeed defined on $\tilde{K}$ and induces the right topology on it. Then a few lines of calculation show that in this metric, both $\tilde{M}_0$ and $\tilde{M}_1$ have Lipschitz constant $\frac{3}{4}$. Hence we can write
\[\forall \tilde{u}_0 \in \tilde{K},\quad  
\left\{ \lim_{n \to \infty}\tilde{M}_{a_1 \ldots a_n}(\tilde{u}_0) \right\} = \bigcap_{n=0}^{\infty}\tilde{M}_{a_1 \ldots a_n}(\tilde{K}).\]
Now let $n \in \mathbb{N}$. Then for any $t$ sufficiently close to, but greater than, $s$, the first $n$ bits of the binary expansions of $t$ and $s$ coincide (this is true because we use the upper binary expansion of $s$), so that we have $s = h_{a_1 \ldots a_n}(S)$ and $t = h_{a_1 \ldots a_n}(T)$ for some $S, T \in [0, 1]$. By Lemma \ref{K stable by m}, we have
\[\pi\big(u(t) - u(s)\big) = \tilde{M}_{a_1 \ldots a_n}\big(\pi \big(u(T) - u(S)\big)\big) \in \tilde{M}_{a_1 \ldots a_n}(\tilde{K}),\]
hence the result.

As for the symmetric case, the proof is completely analogous.
\end{proof}

\begin{cor}
\label{properties of Du}
The quantity
\[\vec{D}u(s) := \lim_{\substack{t \to s \\ t \in [0, 1]}}\pi\big(u(t) - u(s)\big)\sgn(t - s)\]
is a well-defined, injective, continuous function of $s$.
\end{cor}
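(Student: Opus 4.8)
The quantity $\vec{D}u(s)$ lives in $\mathbb{P}^1(\mathbb{R})$, so rescaling a representative by $\sgn(t-s)=\pm 1$ does not change the projective point: the factor $\sgn(t-s)$ is cosmetic, and $\vec{D}u(s)$ is just the limit of $\pi\big(u(t)-u(s)\big)$ as $t\to s$ in $[0,1]$. The plan is to read all three properties off the coding map
\[\Psi:\ \{0,1\}^{\mathbb{N}} \to \tilde{K},\qquad \Psi\big((a_i)_{i\ge 1}\big) := \lim_{n\to\infty}\tilde{M}_{a_1\ldots a_n}(\tilde{u}_0) = \bigcap_{n=0}^{\infty}\tilde{M}_{a_1\ldots a_n}(\tilde{K}),\]
which by Proposition \ref{tangentformula} is well defined and independent of $\tilde{u}_0\in\tilde{K}$. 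That same proposition identifies the right-hand (resp. left-hand) one-sided limit of $\pi\big(u(t)-u(s)\big)$ with $\Psi$ evaluated at the upper (resp. lower) binary expansion of $s$. So the statement reduces to: (well-definedness) $\Psi$ agrees on the two expansions of a dyadic $s$; and then (continuity) and (injectivity) of the map it induces.

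For a non-dyadic $s$ the two expansions coincide, and at the endpoints $s=0,1$ only one side occurs, so the only real case is an interior dyadic $s=0.a_1\ldots a_{k-1}1$, with upper expansion $a_1\ldots a_{k-1}1\overline{0}$ and lower expansion $a_1\ldots a_{k-1}0\overline{1}$. Each $\tilde{M}_i$ is a $\tfrac34$-contraction of $\tilde{K}$ (proof of Proposition \ref{tangentformula}) whose unique fixed point in $\tilde{K}$ is the attracting eigendirection $\pi(\vec{v}_i)$ (eigenvalue $\tfrac35>\tfrac15$), so $\tilde{M}_0^{\,n}(\tilde{u}_0)\to\pi(\vec{v}_0)$ and $\tilde{M}_1^{\,n}(\tilde{u}_0)\to\pi(\vec{v}_1)$. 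Hence
\[\Psi(\text{upper})=\tilde{M}_{a_1\ldots a_{k-1}1}\big(\pi(\vec{v}_0)\big),\qquad \Psi(\text{lower})=\tilde{M}_{a_1\ldots a_{k-1}0}\big(\pi(\vec{v}_1)\big),\]
which, since $\tilde{M}_{a_1\ldots a_{k-1}}$ is a bijection, are equal iff $\tilde{M}_1\big(\pi(\vec{v}_0)\big)=\tilde{M}_0\big(\pi(\vec{v}_1)\big)$, i.e. iff $M_1\vec{v}_0$ and $M_0\vec{v}_1$ are collinear. This is where the arithmetic of the problem enters, and it is the step I expect to carry the whole statement: a direct computation gives $M_1\vec{v}_0=M_0\vec{v}_1=\tfrac{3}{10}(-1,1,0)^{\mathsf{T}}$, so the two values coincide and $\vec{D}u$ is well defined.

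For continuity I would pass through the quotient $\beta:\{0,1\}^{\mathbb{N}}\to[0,1]$, $\beta\big((a_i)\big)=\sum_{i\ge 1}a_i 2^{-i}$. The map $\Psi$ is continuous for the product topology: if two sequences share their first $n$ symbols, both images lie in $\tilde{M}_{a_1\ldots a_n}(\tilde{K})$, whose diameter is at most $(\tfrac34)^n\,\mathrm{diam}(\tilde{K})$ by the contraction estimate. Now $\beta$ is a continuous surjection from a compact space onto a Hausdorff space, hence closed, hence a topological quotient; its fibres are precisely the pairs $\{\text{upper},\text{lower}\}$ (singletons away from the dyadics). By the previous paragraph $\Psi$ is constant on these fibres, so it factors as $\Psi=\vec{D}u\circ\beta$ with $\vec{D}u:[0,1]\to\tilde{K}$ continuous.

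Injectivity follows the template of Corollary \ref{u injective}. If $s\neq s'$, let $k$ be the first index where their expansions differ, say $a_k=0$, $a'_k=1$. Applying the bijection $\tilde{M}_{a_1\ldots a_{k-1}}^{-1}$ reduces $\vec{D}u(s)=\vec{D}u(s')$ to an equality between a point of $\tilde{M}_0(\tilde{K})$ and a point of $\tilde{M}_1(\tilde{K})$. A short computation entirely analogous to that of Corollary \ref{u injective} shows these two arcs meet only at their common endpoint $\pi(M_0\vec{v}_1)=\pi(M_1\vec{v}_0)$; thus both points equal it, which forces the respective tails to be $\pi(\vec{v}_1)$ and $\pi(\vec{v}_0)$, i.e. $a_i=1$ and $a'_i=0$ for all $i>k$. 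Then $s=0.a_1\ldots a_{k-1}0\overline{1}=0.a_1\ldots a_{k-1}1\overline{0}=s'$, contradicting $s\neq s'$. The main obstacle is the well-definedness step; once the coincidence $M_1\vec{v}_0=M_0\vec{v}_1$ is in hand, both continuity and injectivity are routine adaptations of the earlier arguments.
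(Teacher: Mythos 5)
Your proposal is correct. For well-definedness and injectivity it takes essentially the paper's route: both arguments reduce to the fact that $\tilde{M}_0(\tilde{K})$ and $\tilde{M}_1(\tilde{K})$ are sub-arcs of $\tilde{K}$ meeting in the single point $\pi(M_0\vec{v}_1)=\pi(M_1\vec{v}_0)$, and then run the proof of Corollary \ref{u injective} \emph{mutatis mutandis}; you usefully make explicit the one computation the paper leaves implicit, namely $M_1\vec{v}_0=M_0\vec{v}_1=\tfrac{3}{10}(-1,1,0)^{\mathsf{T}}$, and you are right that this single coincidence carries the whole statement. Where you genuinely diverge is continuity: the paper proves a quantitative modulus directly --- if $|s-s'|\leq 2^{-n}$ then the two length-$n$ prefixes $w,w'$ are equal or consecutive, and images of consecutive cylinders share a point, so $\vec{D}u(s)$ and $\vec{D}u(s')$ lie in two arcs of diameter at most $(\tfrac{3}{4})^n$ times a constant whose union is connected --- whereas you obtain continuity qualitatively by factoring $\Psi$ through the quotient map $\beta:\{0,1\}^{\mathbb{N}}\to[0,1]$ (a closed continuous surjection from a compact space, hence a topological quotient) once $\Psi$ is known to be constant on its fibres. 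Your version is slicker and avoids re-proving the ``consecutive words share a point'' induction, at the cost of not producing an explicit H\"older-type modulus for $\vec{D}u$; note also that the paper's continuity paragraph states the key identity with the eigenvectors $\vec{w}_i$, which do not lie in $\vec{K}$ and do not satisfy $\tilde{M}_1(\pi(\vec{w}_0))=\tilde{M}_0(\pi(\vec{w}_1))$ --- your identity with the $\vec{v}_i$ is the correct one. Both arguments ultimately rest on the same two ingredients: the $\tfrac{3}{4}$-contraction estimate from Proposition \ref{tangentformula} and the coincidence of the two one-sided limits at dyadic points.
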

In geometric terms, $\vec{D}u(s)$ is the direction of the tangent to the curve which is the \emph{image} of $u$. This result may be interpreted as saying that this curve is $C^1$ and convex --- which is after all not so surprising if we think of it as the shape of the side of an elastic Sierpinski triangle stretched on three nails (see Figure \ref{fig:harmsierp}). Note that the \emph{parametrization} of the curve is much less regular than that --- as we shall see, it is not even always differentiable.
\begin{proof}
Being given two sequences $(a_i)_{i \geq 1}$ and $(a'_i)_{i \geq 1}$ with values in $\{0, 1\}$, we claim that
\[\bigcap_{n=0}^{\infty}\tilde{M}_{a_1 \ldots a_n}(\tilde{K}) = \bigcap_{n=0}^{\infty}\tilde{M}_{a'_1 \ldots a'_n}(\tilde{K})\]
if and only if both sequences are binary expansions of the same number. The proof simply follows, \emph{mutatis mutandis}, the proof of Corollary \ref{u injective}; we need the fact that the $\tilde{M}_i$ are injective and that $\tilde{M}_0(\tilde{K})$ and $\tilde{M}_1(\tilde{K})$ intersect at a single point, which are easily seen to be true. Combined with the previous Proposition, the ``if'' part shows that the function is well-defined and the ``only if'' part shows that it is injective.

As for the continuity, let $\eps > 0$. Since $\tilde{M}_0$ and $\tilde{M}_1$ are contractive (in a suitable metric), we can find a number $n$ such that the diameter of every possible $\tilde{M}_{a_1 \ldots a_n}(\tilde{K})$ is less than $\frac{\eps}{2}$. Now let $s, s' \in [0, 1]$ such that $|s - s'| \leq 2^{-n}$. Let $s = 0.a_1 \ldots a_n \ldots$ and $s' = 0.a'_1 \ldots a'_n \ldots$ be their binary expansions; let $w = a_1 \ldots a_n$, so that $\vec{D}u(s) \in \tilde{M}_w(\tilde{K})$, and similarly define $w'$. Then two cases are possible: either $w = w'$, or $w$ and $w'$ are two consecutive words. However, knowing that $\tilde{M}_0\big(\pi(\vec{w}_0)\big) = \pi(\vec{w}_0)$, $\tilde{M}_1\big(\pi(\vec{w}_1)\big) = \pi(\vec{w}_1)$ and $\tilde{M}_1\big(\pi(\vec{w}_0)\big) = \tilde{M}_0\big(\pi(\vec{w}_1)\big)$, a trivial induction shows that in the latter case, $\tilde{M}_w(\tilde{K})$ and $\tilde{M}_w(\tilde{K})$ have a common point. Hence the distance between $\vec{D}u(s)$ and $\vec{D}u(s')$ is less than $\eps$, which finishes the proof.
\end{proof}

\section{Local behavior of $u$ and $\trifun{u}{a}{c}{b}$: rate of change}
\label{sec:norm}
\subsection{Hölder exponent formula}
\label{sec:hölderexpformula}

In the previous section, we have seen that as $t$ tends to $s$, the direction of $u(t) - u(s)$ behaves rather well. We shall now see that its norm behaves much more erratically. A naive approach would be to simply try to calculate the derivative $u'(s)$; however, we shall see in Proposition \ref{no finite derivative} that whenever this derivative exists, it is equal to either $0$ or $\infty$, so it is not the most relevant parameter here.

We introduce the following parameters, that will provide a better description of the local behavior of $u$:
\begin{defn}
Let $s \in [0, 1]$.
\begin{itemize}
\item The \emph{upper Hölder exponent} of $u$ in $s$ is
\[
\alpha_{\sup}(u, s) := \limsup_{\substack{t \to s \\ t \in [0, 1]}}\frac{\ln \|u(t) - u(s)\|}{\ln |t-s|}.
\]
\item The \emph{lower Hölder exponent} of $u$ in $s$ is
\[
\alpha_{\inf}(u, s) := \liminf_{\substack{t \to s \\ t \in [0, 1]}}\frac{\ln \|u(t) - u(s)\|}{\ln |t-s|}.
\]
\item When both are equal, we shall simply call them the \emph{Hölder exponent} of $u$ in $s$, and we shall write
\[
\alpha(u, s) := \lim_{\substack{t \to s \\ t \in [0, 1]}}\frac{\ln \|u(t) - u(s)\|}{\ln |t-s|}.
\]
\end{itemize}
\end{defn}
Once we know the Hölder exponent of $u$, we can usually calculate its derivative, and then the derivative of $\trifun{u}{a}{c}{b}$. Indeed:
\begin{lem}
\label{Hölder exponent and derivative}
Let $s \in [0, 1]$. Then we have :
\begin{itemize}
\item $\alpha_{\inf}(u, s) > 1 \implies u'(s) = 0      \implies \alpha_{\inf}(u, s) \geq 1;$
\item $\alpha_{\sup}(u, s) < 1 \implies u'(s) = \infty \implies \alpha_{\sup}(u, s) \leq 1,$
\end{itemize}
where the latter equality is shorthand for ``$\lim_{t \to s}\frac{\|u(t) - u(s)\|}{|t-s|} = \infty$''.
\end{lem}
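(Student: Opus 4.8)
The plan is to translate everything into statements about the \emph{rate of change} $\rho(t) := \frac{\|u(t) - u(s)\|}{|t - s|}$, since by definition $u'(s) = 0$ means $\rho(t) \to 0$ and $u'(s) = \infty$ means $\rho(t) \to \infty$. The crucial preliminary observation is that $u$ is injective (Corollary \ref{u injective}), so $\|u(t) - u(s)\| > 0$ for $t \neq s$ and the logarithm $\ln\|u(t) - u(s)\|$ is genuinely defined; moreover, for $t$ close enough to $s$ we have $|t - s| < 1$, hence $\ln|t - s| < 0$. This sign is what reverses every inequality when I divide by $\ln|t - s|$, and keeping track of it is the only real point of vigilance in the argument.

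The computation rests on a single identity. Writing $\|u(t) - u(s)\| = \rho(t)\,|t - s|$ and taking logarithms gives
\[
\frac{\ln\|u(t) - u(s)\|}{\ln|t - s|} = 1 + \frac{\ln \rho(t)}{\ln|t - s|}.
\]
Since $\ln|t - s| \to -\infty$ as $t \to s$, the fraction $\frac{\ln\rho(t)}{\ln|t-s|}$ has the \emph{opposite} sign to $\ln\rho(t)$, and all four implications follow by feeding the two possible hypotheses into this identity.

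For the first bullet: if $\alpha_{\inf}(u, s) > 1$, I pick $\alpha$ with $1 < \alpha < \alpha_{\inf}(u,s)$, so that for $t$ near $s$ the left-hand side exceeds $\alpha$; then $\frac{\ln\rho(t)}{\ln|t-s|} \geq \alpha - 1 > 0$, and multiplying by $\ln|t - s| < 0$ yields $\ln\rho(t) \leq (\alpha - 1)\ln|t - s| \to -\infty$, whence $\rho(t) \to 0$, i.e.\ $u'(s) = 0$. Conversely, if $u'(s) = 0$ then $\rho(t) \to 0$, so $\ln\rho(t) < 0$ for $t$ near $s$, which forces $\frac{\ln\rho(t)}{\ln|t-s|} > 0$ and therefore $\frac{\ln\|u(t)-u(s)\|}{\ln|t-s|} > 1$ throughout a punctured neighbourhood of $s$; taking the liminf gives $\alpha_{\inf}(u, s) \geq 1$. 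The second bullet is obtained by the same manipulation with the inequalities reversed: if $\alpha_{\sup}(u, s) < 1$ one chooses $\beta$ with $\alpha_{\sup}(u,s) < \beta < 1$, deduces $\ln\rho(t) \geq (\beta - 1)\ln|t-s| \to +\infty$ and hence $\rho(t) \to \infty$; and if $u'(s) = \infty$ then $\ln\rho(t) > 0$ eventually, forcing the quotient below $1$ and so $\alpha_{\sup}(u, s) \leq 1$.

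As anticipated, there is no deep obstacle here: the statement is essentially a bookkeeping exercise about the logarithm, and the only thing that can go wrong is a sign error when dividing by the negative quantity $\ln|t - s|$, or a failure to insert a strict intermediate exponent $\alpha$ (resp.\ $\beta$) between the (lim)inf/sup and $1$ when converting the hypothesis $\alpha_{\inf} > 1$ (resp.\ $\alpha_{\sup} < 1$) into a uniform bound on a whole neighbourhood of $s$. The gap between the strict and non-strict conclusions (e.g.\ $u'(s) = 0 \Rightarrow \alpha_{\inf} \geq 1$ rather than $> 1$) is genuine: it reflects the boundary case $\rho(t) \to 0$ with $\frac{\ln\rho(t)}{\ln|t-s|} \to 0$, which is precisely why one cannot recover the Hölder exponent from the derivative alone.
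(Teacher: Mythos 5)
Your proof is correct: the paper dismisses this lemma with the single line ``This is an easy exercise in real analysis,'' and your argument---rewriting the H\"older quotient as $1 + \frac{\ln\rho(t)}{\ln|t-s|}$ and tracking the sign of $\ln|t-s|<0$---is exactly the standard argument it has in mind, with the strict/non-strict distinction and the need for an intermediate exponent handled properly. Nothing to add.
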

\begin{proof}This is an easy exercise in real analysis.\end{proof}

\begin{lem}
\label{Link between u and uacb}
Let $\Phi = \begin{pmatrix}a & b & c\end{pmatrix}$ be a linear form, and let $s \in [0, 1]$ such that $\vec{D}u(s) \notin \ker \Phi$. Then we have:
\begin{itemize}
\item $(\trifun{u}{a}{c}{b})'(s) = 0$ iff $u'(s) = 0$;
\item $(\trifun{u}{a}{c}{b})'(s) = \infty$ iff $u'(s) = \infty$;
\item $(\trifun{u}{a}{c}{b})'(s)$ is undefined iff $u'(s)$ is undefined.
\end{itemize}
\end{lem}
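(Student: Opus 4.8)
The plan is to exploit the factorization $\trifun{u}{a}{c}{b} = \Phi \circ u$, which turns the scalar difference quotient into a product of a magnitude term and a direction term. Writing $g := \trifun{u}{a}{c}{b}$, I have for $t \neq s$
\[ \frac{g(t)-g(s)}{t-s} = \Phi\!\left(\frac{u(t)-u(s)}{t-s}\right) = \rho(t)\,\Phi\big(\hat{m}(t)\big), \]
where $\rho(t) := \frac{\|u(t)-u(s)\|}{|t-s|}$ and $\hat{m}(t)$ is the unit vector in the direction of $\frac{u(t)-u(s)}{t-s}$. The whole argument rests on showing that $\hat{m}(t)$ converges to a fixed nonzero vector on which $\Phi$ does not vanish, so that the behaviour of $g'(s)$ is dictated entirely by the scalar $\rho(t)$ --- which is exactly what governs $u'(s)$.

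First I would pin down $u'(s)$ in terms of $\rho$. By Corollary \ref{properties of Du}, $\pi\big(u(t)-u(s)\big)$ tends to $\vec{D}u(s)$ as $t \to s$ from either side. Since $\vec{K}$ is a salient cone (its projective image $\tilde{K}$ is a proper subinterval of $\mathbb{P}^1(\mathbb{R})$), this projective convergence lifts to genuine convergence of unit vectors: for $t > s$ the normalized $\frac{u(t)-u(s)}{\|u(t)-u(s)\|} \in \vec{K}$ tends to a fixed unit vector $\hat{n}$ with $\pi(\hat{n}) = \vec{D}u(s)$, and for $t < s$ the normalized $\frac{u(s)-u(t)}{\|u(s)-u(t)\|} \in \vec{K}$ tends to the same $\hat{n}$ (Lemma \ref{K stable by m} places these vectors in $\vec{K}$, and Proposition \ref{tangentformula} identifies the common limit). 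A short sign check then gives $\hat{m}(t) \to \hat{n}$ from both sides. Consequently the direction of the vector difference quotient always converges, so $u'(s)$ is controlled solely by $\rho(t)$: one has $u'(s) = 0$ iff $\rho(t) \to 0$, $u'(s) = \infty$ iff $\rho(t) \to \infty$, and $u'(s)$ undefined iff $\rho(t)$ has no limit in $[0, \infty]$.

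Next I would bring in the hypothesis. Since $\vec{D}u(s) = \pi(\hat{n}) \notin \ker \Phi$, the linear form $\Phi$ does not annihilate the nonzero vector $\hat{n}$, so $c := \Phi(\hat{n}) \neq 0$ and $\Phi\big(\hat{m}(t)\big) \to c$. Feeding this into the displayed factorization, $\frac{g(t)-g(s)}{t-s} = \rho(t)\,\Phi(\hat{m}(t))$ with $\Phi(\hat{m}(t))$ converging to the nonzero constant $c$. Because multiplication by a factor tending to a nonzero limit neither creates nor destroys limits in $\bar{\mathbb{R}}$, the quotient $\frac{g(t)-g(s)}{t-s}$ has a limit in $\bar{\mathbb{R}}$ exactly when $\rho(t)$ does, that limit is $0$ exactly when $\rho(t) \to 0$, and it is infinite (with sign $\sgn c$) exactly when $\rho(t) \to \infty$. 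Matching these three alternatives against the three characterizations of $u'(s)$ from the previous paragraph yields the three equivalences of the lemma.

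I expect the only delicate point to be the passage from the projective statement of Corollary \ref{properties of Du} to honest convergence of the unit direction vector $\hat{m}(t)$, including the verification that the two one-sided limits agree as \emph{actual vectors} (not merely as lines) and that the sign works out so that $\Phi(\hat{m}(t))$ tends to a single nonzero constant from both sides. Everything else is routine bookkeeping with limits in $\bar{\mathbb{R}}$. One should also fix the reading of the shorthand ``$g'(s) = \infty$'' as $\big|\frac{g(t)-g(s)}{t-s}\big| \to \infty$, consistent with the convention adopted for $u$ in Lemma \ref{Hölder exponent and derivative}.
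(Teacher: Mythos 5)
Your proposal is correct and follows exactly the route the paper intends: the paper's own proof is the single sentence ``this is basically an application of the formula for the derivative of a composite function,'' and your argument is a faithful, fully detailed expansion of that idea --- factoring $\trifun{u}{a}{c}{b} = \Phi \circ u$, splitting the difference quotient into magnitude and direction, and using Corollary \ref{properties of Du} together with the salience of $\vec{K}$ to lift projective convergence of the direction to genuine convergence of the unit vector, so that $\Phi(\hat{m}(t))$ tends to a nonzero constant. The delicate points you flag (lifting from $\mathbb{P}^1(\mathbb{R})$ to unit vectors, agreement of the two one-sided limits, and the reading of ``$=\infty$'') are handled correctly.
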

\begin{proof}This is basically an application of the formula for the derivative of a composite function.\end{proof} 

Note that by Corollary \ref{properties of Du}, for a given value of $\Phi$, this lemma applies for all values of $s$ except at most one. In particular, it is easy to check that for the functions $\phi$, $\psi$, $\chi$ and $\xi$, introduced in \cite{kirillov}, this lemma holds with only two exceptions: $\chi'(0)$ and $\xi'(1)$ (and in \cite{kirillov}, it is shown that both are equal to $0$.)

Here is the formula that allows one to calculate the Hölder exponent:
\begin{prop}
\label{Hölder exponent general formula}
Let $s \in [0, 1]$, $s = 0.a_1a_2 \ldots$ its binary expansion. Then we have:
\begin{equation}\label{eq:Hölderexpformula}
\begin{gathered}
\alpha_{\inf}\left(u, s\right)
  = \liminf_{n \to \infty}
     \frac 
   	  {\ln \| \vec{M}_{a_1 \ldots a_n} \|}
   	  {n \ln \left(\frac{1}{2}\right)}; \\
\alpha_{\sup}\left(u, s\right)
  = \limsup_{n \to \infty}
     \frac 
   	  {\ln \| \vec{M}_{a_1 \ldots a_n} \|}
   	  {n \ln \left(\frac{1}{2}\right)}
\end{gathered}
\end{equation}
\end{prop}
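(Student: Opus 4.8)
The plan is to reduce \eqref{eq:Hölderexpformula} to a single two-sided estimate and then let the logarithms do the work. The bridge is the subdivision identity already exploited above: if $t$ and $s$ share their first $n$ binary digits, then $s=h_{a_1\ldots a_n}(S)$ and $t=h_{a_1\ldots a_n}(T)$ for some $S,T\in[0,1]$, whence by \eqref{eq:recrelu}
\[
u(t)-u(s)=\vec{M}_{a_1\ldots a_n}\big(u(T)-u(S)\big),\qquad |t-s|=2^{-n}|T-S|.
\]
My goal is to upgrade this into the following: there are constants $0<c\le C$, independent of $s$, such that for every $t\neq s$ close to $s$, with $n$ defined by $2^{-n-1}<|t-s|\le 2^{-n}$, one has $c\,\|\vec{M}_{a_1\ldots a_n}\|\le\|u(t)-u(s)\|\le C\,\|\vec{M}_{a_1\ldots a_n}\|$. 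Granting this, taking logarithms gives $\ln\|u(t)-u(s)\|=\ln\|\vec{M}_{a_1\ldots a_n}\|+O(1)$ and $\ln|t-s|=n\ln\frac12+O(1)$; since $\ln\|\vec{M}_{a_1\ldots a_n}\|$ and $n\ln\frac12$ are both of order $n$, the quotient equals $\frac{\ln\|\vec{M}_{a_1\ldots a_n}\|}{n\ln\frac12}+O(\frac1n)$. As $t\to s$ one has $n\to\infty$, every large $n$ arises from some admissible $t$, and conversely; hence the set of limit points of $\frac{\ln\|u(t)-u(s)\|}{\ln|t-s|}$ coincides with that of the sequence on the right of \eqref{eq:Hölderexpformula}, and taking $\limsup$ and $\liminf$ yields both lines.

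The first ingredient is a comparison lemma: uniformly over all finite words $w$ and all unit vectors $\vec{x}\in\vec{K}$,
\[
\|\vec{M}_w\vec{x}\|\;\ge\;c_0\,\|\vec{M}_w\|,
\]
the reverse inequality being trivial. Informally, on the invariant cone $\vec{K}$ every product realizes its operator norm up to a fixed factor. To prove it I would write $\|\vec{M}_w\|=\sigma_1$ for the largest singular value, bound $\|\vec{M}_w\vec{x}\|\ge\sigma_1\,|\langle\vec{x},\vec{e}_1\rangle|$ with $\vec{e}_1$ the associated right singular vector, and show that $\vec{x}\in\vec{K}$ stays uniformly away from the most contracted direction $\vec{e}_2\perp\vec{e}_1$. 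That direction is the most expanded direction of $\vec{M}_w^{-1}=\vec{M}_{a_n}^{-1}\cdots\vec{M}_{a_1}^{-1}$, and the inverses $\vec{M}_i^{-1}$ have dominant eigenvectors $\vec{w}_0,\vec{w}_1$ and preserve the cone these span --- a cone disjoint from $\vec{K}$, on which they act as projective contractions by the very computation of Proposition \ref{tangentformula}. Hence $\vec{e}_2$ lies in a fixed neighbourhood of that complementary cone, at a positive angle from $\vec{K}$, which gives the needed lower bound on $|\langle\vec{x},\vec{e}_1\rangle|$. Feeding any fixed unit $\vec{x}\in\vec{K}$ into this lemma (and using that $\vec{M}_{w'}\vec{x}\in\vec{K}$) also yields the near-multiplicativity $\|\vec{M}_w\vec{M}_{w'}\|\ge c_0^2\,\|\vec{M}_w\|\,\|\vec{M}_{w'}\|$, matching the trivial submultiplicative bound; I will use this freely.

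With the comparison lemma, the two-sided estimate becomes a statement about the residual $u(T)-u(S)$. When $t$ and $s$ are generically separated inside their common cylinder, i.e. $|T-S|$ is bounded below, injectivity and continuity of $u$ make $\|u(T)-u(S)\|$ positively bounded, and the estimate follows at once with $n$ the shared prefix length. The delicate case is when $t$ and $s$ both cluster near a dyadic midpoint of the cylinder: then $|T-S|$ is small and the shared prefix is much shorter than $-\log_2|t-s|$. The structural fact that saves the day is the cancellation $M_1 e_0=M_0 e_1$ recorded in the proof of Corollary \ref{u injective}, which lets me split
\[
u(T)-u(S)=\vec{M}_1\big(u(T')-u(0)\big)+\vec{M}_0\big(u(1)-u(S')\big),
\]
a sum of two vectors both lying in the proper cone $\vec{K}$. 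Since $\vec{K}$ is proper there is no cancellation, so $\|u(T)-u(S)\|$ is comparable to the larger summand; each summand is again a residual of the same type, one digit deeper. Iterating, every step consumes one run of equal digits of $s$, and the comparison lemma reassembles the accumulated factors into precisely $\vec{M}_{a_1\ldots a_n}$ for the true scale $n$. I expect this inductive bookkeeping near dyadic points --- checking that the apparent loss coming from a small residual is always exactly compensated by the corresponding run of factors in the product $\vec{M}_{a_1\ldots a_n}$ --- to be the main obstacle, the comparison lemma being the main conceptual input; once both are in place, the passage to $\limsup$ and $\liminf$ is routine, and the two expansions of a dyadic $s$ are seen to give the same value since $\|\vec{M}_0^k\|$ and $\|\vec{M}_1^k\|$ have the same growth.
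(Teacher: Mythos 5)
Your overall strategy is the paper's: reduce everything to the two-sided estimate $\ln\|u(t)-u(s)\|=\ln\|\vec{M}_{a_1\ldots a_n}\|+O(1)$ with $n\asymp-\log_2|t-s|$, via a cone-comparison lemma plus a splitting at a dyadic point when $t$ and $s$ do not share a length-$n$ prefix. But two steps have genuine gaps. First, your justification of the comparison lemma rests on the claim that the inverses $\vec{M}_i^{-1}$ preserve the cone spanned by $\vec{w}_0,\vec{w}_1$ and that this cone is disjoint from $\vec{K}$. Both halves are false: that cone, $\vec{K}'=\setsuch{a\vec{w}_0+b\vec{w}_1}{a,b\geq 0}$, satisfies $\vec{M}_i(\vec{K}')\subset\vec{K}'$, so the inverses map it \emph{outside} itself (e.g. $\vec{M}_0^{-1}\vec{w}_1=\tfrac{5}{3}(\vec{w}_1-\vec{w}_0)\notin\vec{K}'$), and $\vec{K}\subset\interior\vec{K}'$ since $\vec{v}_0=\vec{w}_0+2\vec{w}_1$ and $\vec{v}_1=2\vec{w}_0+\vec{w}_1$. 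The lemma you want is true (it is the paper's Corollary \ref{Matrix-vector separation}), but the clean proof avoids singular vectors entirely: in a basis where the \emph{forward}-invariant cone $\vec{K}'$ is the positive quadrant every $\vec{M}_w$ has nonnegative entries, and for $\vec{x}=(x,y)$ in the subcone $\vec{K}\subset\interior\vec{K}'$ one has $\min(x,y)\geq\mu\max(x,y)$, whence $\|\vec{M}_w\vec{x}\|_\infty\geq\mu\,\|\vec{M}_w\|_\infty\|\vec{x}\|_\infty$ directly.

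Second, the ``delicate case''. The iteration you propose both fails to terminate cleanly (after splitting at the shared-prefix depth $m$, the two residuals can still both be small, so nothing is gained at that step) and defers the real issue: the two summands naturally carry $\|\vec{M}_w\|$ and $\|\vec{M}_{w'}\|$ for \emph{different} words, and ``reassembling'' these into $\|\vec{M}_{a_1\ldots a_n}\|$ for the single word read off from $s$ is precisely what must be proved. The paper cuts at depth $n$ from the outset: then $w$ and $w'$ are consecutive words, the residual lengths satisfy $(S'-0)+(1-S)=2^n(s'-s)\geq\tfrac12$ so at least one is bounded below, and the missing comparison $\ln\|\vec{M}_{w}\|=\ln\|\vec{M}_{w'}\|+O(1)$ follows from the identity $\vec{M}_{w}(\vec{v}_0)=\vec{M}_{w'}(\vec{v}_1)$, itself a consequence of $\vec{M}_0\vec{v}_1=\vec{M}_1\vec{v}_0$ together with $\vec{M}_i\vec{v}_i=\tfrac35\vec{v}_i$. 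That identity is the one ingredient your sketch does not contain, and without it (or a substitute) the straddling case does not close.
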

(compare this with \eqref{eq:formulau} and \eqref{eq:directionformula}). Before proving it, we shall need the following lemma:
\begin{lem}
Let $f$ be a linear automorphism of $\mathbb{R}^2$, let $\mathcal{C}$ be a salient closed cone that is stable by $f$, and let $\mathcal{C'} \subset \interior \mathcal{C}$ such that $\mathcal{C'} \cup \{0\}$ is also a closed cone. Then for all $\vec{x} \in \mathcal{C'}$, the ratio
\[\frac{\|f(\vec{x})\|}{\|f\|\|\vec{x}\|}\]
is positively bounded by constants that depend only on $\mathcal{C}$ and $\mathcal{C'}$.
\end{lem}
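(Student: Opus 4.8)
The plan is to separate the two inequalities. The upper bound is immediate: by the very definition of the operator norm, $\|f(\vec{x})\| \le \|f\|\,\|\vec{x}\|$ for every $\vec{x}$, so the ratio never exceeds $1$. All the work lies in the lower bound, i.e.\ in producing a constant $c > 0$, depending only on $\mathcal{C}$ and $\mathcal{C}'$, with $\|f(\vec{x})\| \ge c\,\|f\|\,\|\vec{x}\|$ for all $\vec{x} \in \mathcal{C}'$. Since both sides are homogeneous of degree $1$ in $\vec{x}$, and since the quantity is unchanged if we replace $f$ by $f/\|f\|$, I would first normalise: it suffices to bound $\|g(\vec{x})\|$ from below by $c$ uniformly over all endomorphisms $g$ of $\mathbb{R}^2$ with $\|g\| = 1$ that stabilise $\mathcal{C}$, and over all unit vectors $\vec{x} \in \mathcal{C}'$.

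Next I would set up a compactness argument. The set $G$ of endomorphisms $g$ satisfying $\|g\| = 1$ and $g(\mathcal{C}) \subseteq \mathcal{C}$ is bounded, and it is closed because the stability condition $g(\mathcal{C}) \subseteq \mathcal{C}$ is a closed condition (here $\mathcal{C}$ being closed is used); living in the finite-dimensional space of endomorphisms, $G$ is therefore compact. Likewise, the set $S'$ of unit vectors of $\mathcal{C}'$ equals $(\mathcal{C}' \cup \{0\})$ intersected with the unit circle, hence is compact as well. The map $(g, \vec{x}) \mapsto \|g(\vec{x})\|$ is continuous on the compact set $G \times S'$, so it attains a minimum value $m$ there, and by construction $m$ depends only on $\mathcal{C}$ and $\mathcal{C}'$. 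Taking $c := m$, it remains only to prove $m > 0$.

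Finally I would establish $m > 0$ by contradiction, and this is the crux. Suppose the minimum is $0$, attained at some $(g_0, \vec{x}_0)$, so that $g_0(\vec{x}_0) = 0$ while $\|g_0\| = 1$ and $\vec{x}_0 \in \mathcal{C}' \subset \interior \mathcal{C}$. Since $\|g_0\| \ne 0$, the map $g_0$ is nonzero, and as the nonzero vector $\vec{x}_0$ lies in its kernel, $g_0$ has rank exactly $1$. Because $\vec{x}_0$ is an \emph{interior} point of $\mathcal{C}$, a whole ball around $\vec{x}_0$ is contained in $\mathcal{C}$; choosing any $\vec{z} \notin \ker g_0$, the two vectors $\vec{x}_0 \pm \eps\vec{z}$ lie in $\mathcal{C}$ for $\eps > 0$ small, and their images $\pm\eps\, g_0(\vec{z})$ are two opposite nonzero vectors of $g_0(\mathcal{C}) \subseteq \mathcal{C}$. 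This contradicts the salience of $\mathcal{C}$, which contains no line. Hence $m > 0$, which finishes the proof. The only genuine obstacle is this last step: one must exploit both that $\vec{x}_0$ is interior (to push a two-sided neighbourhood through the rank-one map $g_0$) and that $\mathcal{C}$ is pointed (to turn the resulting pair of opposite vectors into a contradiction). The uniformity of the constant with respect to $f$ — the whole point of the statement — is handled automatically by the compactness of $G$.
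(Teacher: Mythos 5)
Your proof is correct, but it takes a genuinely different route from the paper's. The paper argues by explicit computation: it chooses a basis in which $\mathcal{C}$ becomes the positive quadrant, so that $f$ is represented by a matrix with nonnegative entries $a,b,c,d$ and $\mathcal{C}'$ by a condition $m \leq \frac{x}{y} \leq M$; working with the supremum norm, it then bounds $\maxof(ax+by,\,cx+dy)$ above by $2\maxof(a,b,c,d)\maxof(x,y)$ and below by $\mu\,\maxof(a,b,c,d)\maxof(x,y)$ with $\mu = \minof(m, M^{-1})$, which yields explicit constants. You instead normalise $f$ and $\vec{x}$, invoke compactness of the set of norm-one endomorphisms preserving $\mathcal{C}$ crossed with the unit vectors of $\mathcal{C}'$, and rule out a zero minimum by the rank-one argument: a nonzero cone-preserving map whose kernel meets $\interior\mathcal{C}$ would send a two-sided neighbourhood of a kernel point to a pair of opposite nonzero vectors of $\mathcal{C}$, contradicting salience. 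Each approach has its merits: the paper's is entirely elementary and produces the constants explicitly, which makes the ``positively bounded'' claim quantitative; yours is non-constructive but cleaner conceptually, does not require choosing an adapted basis, and generalises verbatim to $\mathbb{R}^n$ (nothing in your contradiction step uses dimension two beyond the fact that a nonzero map with nontrivial kernel exists). Both correctly isolate the two hypotheses that matter --- $\mathcal{C}' \subset \interior\mathcal{C}$ for the lower bound and salience of $\mathcal{C}$ to prevent collapse --- so your argument is a valid substitute for the one in the text.
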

\begin{proof}
Let us choose a basis in which $\mathcal{C}$ becomes the upper right quadrant. We may now write:
\begin{itemize}
\item $\mathcal{C} = \setsuch{(x, y) \in \mathbb{R}^2}{x \geq 0,\; y \geq 0}$;
\item $f = \begin{pmatrix}a & b\\c & d\end{pmatrix}$, where the coefficients $a$, $b$, $c$ and $d$ are all nonnegative;
\item $\mathcal{C'} = \setsuch{(x, y) \in \mathcal{C} \setminus \{0\}}{m \leq \frac{x}{y} \leq M}$, where $m$ and $M$ are some finite positive bounds.
\end{itemize}
Let $\vec{x} = (x, y) \in \mathcal{C'}$. Let us use the supremum norm: we have $\|\vec{x}\|_{\infty} = \maxof(x, y)$, $\|f\|_{\infty} = \maxof(a, b, c, d)$ and $\|f(\vec{x})\|_{\infty} = \maxof(ax+by,\; cx+dy)$. But we have, on the one hand
\[\maxof(ax+by,\; cx+dy)\; \leq\; 2\maxof(a, b, c, d)\maxof(x, y),\]
and on the other hand
\begin{align*}
\maxof(ax+by,\; cx+dy)\; &\geq\; \maxof(a, b, c, d)\minof(x, y)\\
                         &\geq\; \mu \maxof(a, b, c, d)\maxof(x, y),
\end{align*}
where $\mu = \minof(m, M^{-1})$ depends only on $\mathcal{C}$ and $\mathcal{C'}$ as announced.
\end{proof}
Let $\vec{K}' =
\setsuch{a\vec{w}_0 + b\vec{w}_1}{a, b \geq 0}$ (on Figure \ref{fig:vech}, it is the cone that lies between $\vec{w}_0$ and $\vec{w}_1$, not represented to avoid overloading the picture). Then it is easy to check that it is stable by $\vec{M}_0$ and $\vec{M}_1$, hence also by all of their products. By applying the lemma on $\mathcal{C} = \vec{K}'$ and $\mathcal{C'} = \vec{K}$ (watch out for the slight inconsistency in notations), we get the following estimation:
\begin{cor}
\label{Matrix-vector separation}
For any finite word $w$ on the alphabet $\{0, 1\}$ and for all $\vec{u}_0 \in \vec{K}$, the ratio
\[\frac{\|\vec{M}_{w}(\vec{u}_0)\|}{\|\vec{M}_{w}\|\|(\vec{u}_0)\|}\]
is positively bounded.
\end{cor}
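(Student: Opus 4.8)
The plan is to obtain the corollary as a direct application of the preceding lemma, the only real work being to verify that its hypotheses hold \emph{uniformly} over all words $w$. Throughout I would regard $\vec{M}_0$, $\vec{M}_1$ and their products as linear maps of the two-dimensional vector plane $\vec{\mathcal{H}}$, and I would take $\mathcal{C} = \vec{K}'$ as the large cone and $\mathcal{C'} = \vec{K}$ as the small one, keeping in mind the clash between the prime in $\vec{K}'$ and the prime in the lemma's $\mathcal{C'}$.

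First I would check that each $\vec{M}_i$ is a linear automorphism of $\vec{\mathcal{H}}$: this is immediate, since $\vec{M}_i$ has eigenvalues $\frac{3}{5}$ and $\frac{1}{5}$, both nonzero. Consequently every product $\vec{M}_w$ is again an automorphism, so it is a legitimate choice of $f$ in the lemma. Next I would verify the two cone conditions. That $\vec{K}'$ is a salient closed cone stable by $\vec{M}_0$ and $\vec{M}_1$ is the short computation already asserted in the text; stability under each generator at once gives stability under every product $\vec{M}_w$, so $\mathcal{C} = \vec{K}'$ is indeed $f$-stable for $f = \vec{M}_w$. I would then confirm that $\vec{K} \subset \interior \vec{K}'$, i.e. that $\vec{v}_0$ and $\vec{v}_1$ lie strictly between $\vec{w}_0$ and $\vec{w}_1$; concretely one writes $\vec{v}_0 = \vec{w}_0 + 2\vec{w}_1$ and $\vec{v}_1 = 2\vec{w}_0 + \vec{w}_1$, both with strictly positive coefficients (this is also visible in Figure \ref{fig:vech}). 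That $\vec{K} \cup \{0\}$ is a closed cone is clear from its defining formula. This establishes that $\mathcal{C'} = \vec{K}$ satisfies the lemma's hypotheses.

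Finally, applying the lemma with $f = \vec{M}_w$ and $\vec{x} = \vec{u}_0 \in \vec{K}$ yields that the ratio $\frac{\|\vec{M}_w(\vec{u}_0)\|}{\|\vec{M}_w\|\|\vec{u}_0\|}$ is bounded above and below by positive constants depending only on the cones $\vec{K}'$ and $\vec{K}$. The one point deserving any care — and what I would flag as the crux — is that the lemma's constants depend only on $\mathcal{C}$ and $\mathcal{C'}$ and not on the particular automorphism $f$ stabilizing $\mathcal{C}$. Since all the $\vec{M}_w$ stabilize the \emph{same} cone $\vec{K}'$ and all vectors $\vec{u}_0$ lie in the \emph{same} inner cone $\vec{K}$, the bounds are one and the same for every word $w$ and every $\vec{u}_0 \in \vec{K}$. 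This uniformity is exactly the assertion that the ratio is positively bounded, so no separate argument controlling the growth of the bounds with the length $|w|$ is needed.
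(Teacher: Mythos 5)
Your proof is correct and follows exactly the paper's route: the paper likewise obtains the corollary by applying the lemma with $\mathcal{C} = \vec{K}'$ (the cone spanned by $\vec{w}_0$ and $\vec{w}_1$, stable under $\vec{M}_0$, $\vec{M}_1$ and hence under all products $\vec{M}_w$) and $\mathcal{C'} = \vec{K}$, relying on the fact that the lemma's constants depend only on the two cones and not on the automorphism. Your explicit verification that $\vec{v}_0 = \vec{w}_0 + 2\vec{w}_1$ and $\vec{v}_1 = 2\vec{w}_0 + \vec{w}_1$, so that $\vec{K} \subset \interior \vec{K}'$, is a detail the paper leaves implicit, and it checks out.
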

\begin{proof}[Proof of Proposition \ref{Hölder exponent general formula}]
Now let $s \in [0, 1]$, and let $s' \in [0, 1]$ be a point close to, but not equal to, $s$. Let $n = \left\lfloor\frac{\ln|s'-s|}{\ln\left(\frac{1}{2}\right)}\right\rfloor$, so that $\frac{1}{2} \leq 2^n|s'-s| \leq 1$. Let $s = 0.a_1 \ldots a_n \ldots$ and $s' = 0.a'_1 \ldots a'_n \ldots$ be the respective binary expansions. Let us write $w = a_1 \ldots a_n$ and $w' = a'_1 \ldots a'_n$; we then have $s = h_{w}(S)$ and $s' = h_{w'}(S')$, for some $S, S' \in [0, 1]$, hence:
\begin{align}
\label{eq:prop91}
\frac{\ln \big\|u(s') - u(s)\big\|}{\ln \big|s'-s\big|}
&= \frac{\ln \big\|u\big(h_{w'}(S')\big) - u\big(h_{w}(S)\big)\big\|}{n \ln\left(\frac{1}{2}\right) + O(1)} \nonumber \\
&= \frac{\ln \big\|M_{w'}\big(u(S')\big) - M_{w}\big(u(S)\big)\big\|}{n \ln\left(\frac{1}{2}\right) + O(1)}.
\end{align}
Now two cases are possible:

\begin{itemize}
\begin{subequations}

\item Suppose $w' = w$. Then by the previous Corollary, we have
\begin{align}
\label{eq:prop92}
\ln \big\|M_{w'}\big(u(S')\big) - M_{w}\big(u(S)\big)\big\|
&= \ln \big\|\vec{M}_{w}\big(u(S') - u(S)\big)\big\| \nonumber \\
&= \ln \|\vec{M}_{w}\|\; +\; \ln \|u(S') - u(S)\|\; +\; O(1).
\end{align}

\item In the other case, by definition of $n$, $w$ and $w'$ are necessarily binary representations of two consecutive numbers. Without loss of generality, we may suppose that $s < s'$. Then we have $h_{w'}(0) = h_{w}(1)$, hence
\[\ln \big\|M_{w'}\big(u(S')\big)\; -\; M_{w}\big(u(S)\big)\big\|\;
=\; \ln \big\|\vec{M}_{w'}\big(u(S') - u(0)\big)\; +\; \vec{M}_{w}\big(u(1) - u(S)\big)\big\|.\]
Now it is easy to see that, given two vectors $\vec{u}, \vec{v}$ that can take values in a salient cone, the ratio between $\|\vec{u} + \vec{v}\|$ and $\|\vec{u}\| + \|\vec{v}\|$ is positively bounded. Since both terms in the above sum lie in $\vec{K}$, we can write, using once again the previous Corollary:
\begin{align*}
&\ln \big\|M_{w'}\big(u(S')\big) - M_{w}\big(u(S)\big)\big\|\; = \\
&\;=\; \ln \Big(\big\|\vec{M}_{w'}\big(u(S') - u(0)\big)\big\|\;
            +\; \big\|\vec{M}_{w}\big(u(1) - u(S)\big)\big\|\Big)\; +\; O(1)\\
&\;=\; \max\Big(\ln \big\|\vec{M}_{w'}\big(u(S') - u(0)\big)\big\|,\quad
                  \ln \big\|\vec{M}_{w}\big(u(1) - u(S)\big)\big\|\Big)\; +\; O(1)\\
&\;=\; \max\Big(\ln \big\|\vec{M}_{w'}\big\| + \ln \big\|u(S') - u(0)\big\|,\quad
                  \ln \big\|\vec{M}_{w}\big\| + \ln \big\|u(1) - u(S)\big\|\Big)\; +\; O(1).
\end{align*}
Now note that we have
\[\vec{M}_{w}(\vec{v}_0) = \vec{M}_{w'}(\vec{v}_1).\]
Indeed, we may write $w = a_0 \ldots a_k 01 \ldots 1$ and $w' = a_0 \ldots a_k 10 \ldots 0$ (where both last groups of bits have equal, either zero or nonzero, length); the identity then follows from the equalities $\vec{M}_0(\vec{v}_0) = \frac{3}{5}\vec{v}_0$, $\vec{M}_1(\vec{v}_1) = \frac{3}{5}\vec{v}_1$ and $\vec{M}_0(\vec{v}_1) = \vec{M}_1(\vec{v}_0)$, that are straightforward to check (see also Figure \ref{fig:vech}). Since $\vec{v}_{0, 1} \in \vec{K}$, we get
\[\ln \|\vec{M}_{w}\| + \ln \|\vec{v}_0\| + O(1)\; =\; \ln \|\vec{M}_{w'}\| + \ln\|\vec{v}_1\| + O(1),\]
or in other words, since $\|\vec{v}_0\| = \|\vec{v}_1\|$,
\[\ln \|\vec{M}_{w'}\|\; =\; \ln \|\vec{M}_{w}\| + O(1).\]
This allows us to write
\begin{align}
\label{eq:prop93}
&\ln \big\|M_{w'}\big(u(S')\big) - M_{w}\big(u(S)\big)\big\|\; = \\
&\; =\; \ln \|\vec{M}_{w}\|\; +\; \max\Big(\ln \big\|u(S') - u(0)\big\|,\; \ln \big\|u(1) - u(S)\big\|\Big)\; +\; O(1). \nonumber
\end{align}

\end{subequations}
\end{itemize}

Let us now check that in both cases, the middle term is bounded. Since $[0, 1]$ is compact and $u$ is continuous, it is clearly bounded from above. For the other inequality, note that, in the first case, we have $(S' - S)\, =\, 2^n(s' - s)\, \geq\, \frac{1}{2}\, >\, \frac{1}{4}$; and in the second case, we have $(S' - 0)\, +\, (1 - S)\, =\, 2^n(s' - s)\, \geq\, \frac{1}{2}$, hence either $(S' - 0) \geq \frac{1}{4}$ or $(1 - S) \geq \frac{1}{4}$. Now consider the set
\[X := \setsuch{(s_1, s_2) \in [0, 1]^2}{|s_1 - s_2| \geq \frac{1}{4}}\]
and the function
\begin{align*}
X          &\to     \mathbb{R}_{\geq 0},\\
(s_1, s_2) &\mapsto \|u(s_1) - u(s_2)\|.
\end{align*}
Since it is continuous and $X$ is compact, it reaches its minimum value. But by Corollary \ref{u injective}, it cannot vanish; hence it is positively bounded. Thus $\|u(S') - u(S)\|$ in the first case, and $\max\big(\|u(S') - u(0)\|,\; \|u(1) - u(S)\|\big)$ in the second case, are positively bounded.

This allows us to merge \eqref{eq:prop92} with \eqref{eq:prop93}, and to write:
\[\ln \big\|M_{w'}\big(u(S')\big) - M_{w}\big(u(S)\big)\big\|\; =\; \ln \|\vec{M}_{w}\| + O(1),\]
and if we plug this into \eqref{eq:prop91}, the result follows.
\end{proof}

\subsection{Case of rational points}
\label{sec:rational}

\begin{thm}
\label{Hölder exponent formula}
Let $s \in [0, 1] \cap \mathbb{Q}$. Then we know that the binary expansion of $s = 0,a_1a_2\ldots$ is eventually periodic; let $n$ be the length of its period, and $p = a_k \ldots a_{k+n-1}$ its period. Then $\vec{M}_{p}$ has distinct, positive, real eigenvalues, and we have
\[\alpha(u, s) = \frac{\ln \lambda_{p}}{n \ln \frac{1}{2}},\]
where $\lambda_{p}$ is the bigger eignevalue of $\vec{M}_{p}$.
\end{thm}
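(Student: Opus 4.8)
The plan is to read the eventually periodic binary expansion into the general formula of Proposition \ref{Hölder exponent general formula}, which reduces everything to the growth rate of $\ln \|\vec{M}_{a_1 \ldots a_N}\|$. First I would split the prefix $a_1 \ldots a_N$ according to the periodicity: writing $q = a_1 \ldots a_{k-1}$ for the preperiod and $r$ for the partial copy of the period that fits at the end, we have $\vec{M}_{a_1 \ldots a_N} = \vec{M}_q\, \vec{M}_p^{\,m}\, \vec{M}_r$, where $N = (k-1) + mn + |r|$, so that $m = \frac{N}{n} + O(1)$. Here $\vec{M}_q$ is a single fixed invertible matrix and $\vec{M}_r$ ranges over the finitely many products attached to prefixes of $p$; sandwiching with their inverses gives $\big|\,\ln\|\vec{M}_{a_1\ldots a_N}\| - \ln\|\vec{M}_p^{\,m}\|\,\big| = O(1)$.

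Granting for the moment that $\vec{M}_p$ has two distinct positive real eigenvalues $\lambda_p > \mu_p > 0$, it is diagonalizable with simple dominant eigenvalue, so $\|\vec{M}_p^{\,m}\| \asymp \lambda_p^{\,m}$ and hence $\ln\|\vec{M}_p^{\,m}\| = m \ln \lambda_p + O(1) = \frac{N}{n}\ln\lambda_p + O(1)$. Substituting into $\frac{\ln\|\vec{M}_{a_1\ldots a_N}\|}{N \ln\frac12}$ and letting $N \to \infty$, both the $\liminf$ and the $\limsup$ collapse to $\frac{\ln\lambda_p}{n\ln\frac12}$; the genuine limit exists, so $\alpha_{\inf} = \alpha_{\sup} = \alpha(u,s)$ takes the announced value.

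It remains to establish the eigenvalue claim, which is the heart of the matter. The matrices $\vec{M}_0,\vec{M}_1$ preserve the salient closed cone $\vec{K}' \cup \{0\}$, so in a basis adapted to it (sending $\vec{K}'$ to the first quadrant) they, and hence $\vec{M}_p$, have nonnegative entries; moreover $\det \vec{M}_p = \big(\det\vec{M}_0\big)^{|p|_0}\big(\det\vec{M}_1\big)^{|p|_1} = \left(\frac{3}{25}\right)^{|p|} > 0$. If $p$ is constant, $\vec{M}_p$ is a power of $\vec{M}_0$ or $\vec{M}_1$, with eigenvalues $\left(\frac35\right)^{|p|} \neq \left(\frac15\right)^{|p|}$, which are distinct, positive and real. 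If $p$ contains both letters, I would show that $\vec{M}_p$ maps the punctured closed cone into the open cone $\interior \vec{K}'$: tracking an extreme ray ($\mathbb{R}_{\geq 0}\vec{w}_0$ or $\mathbb{R}_{\geq 0}\vec{w}_1$) through the word read right to left, one uses that $\vec{M}_0$ fixes only the ray $\mathbb{R}\vec{w}_0$ and $\vec{M}_1$ only $\mathbb{R}\vec{w}_1$, and that each $\vec{M}_i$ carries $\interior\vec{K}'$ into itself, so that the first occurrence of the opposite letter pushes the ray strictly inside, after which it stays inside. Consequently $\vec{M}_p$ has strictly positive entries in the adapted basis; by Perron--Frobenius its spectral radius $\lambda_p$ is a simple eigenvalue, strictly larger in modulus than the other eigenvalue, which is real (the discriminant $(a-d)^2 + 4bc$ is positive) and, having product $\det\vec{M}_p > 0$ with $\lambda_p$, is also positive. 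Thus $\lambda_p$ is precisely the larger eigenvalue appearing in the norm asymptotics.

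The main obstacle is this last eigenvalue analysis for non-constant periods, namely verifying the strict contraction of the cone into its interior; the sandwich reduction and the diagonalization step are routine. Two small points deserve a check: that the $\lambda_p$ furnished by Perron--Frobenius is indeed the spectral radius governing $\|\vec{M}_p^{\,m}\|$ (it is, by diagonalizability), and that the result does not depend on which binary expansion of a dyadic $s$ is used --- both expansions are eventually constant (period $\bar 0$, resp. $\bar 1$), each giving $\lambda_p = \frac35$ and hence the same exponent $\frac{\ln\frac35}{\ln\frac12}$.
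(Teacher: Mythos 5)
Your proof is correct, and its skeleton (reduce to the growth of $\|\vec{M}_p^{\,m}\|$ via Proposition \ref{Hölderexpformulaplaceholder}) matches the paper's. Let me compare the two more precisely. For the reduction, the paper disposes of the preperiod with Lemma \ref{Reduction to periodic case} and then passes directly to the subsequence $N = kn$, leaving implicit the $O(1)$ contribution of the partial period $\vec{M}_r$ that you spell out; your sandwich with $\vec{M}_q^{-1}$ and $\vec{M}_r^{-1}$ is exactly the right justification. The real divergence is in the spectral analysis of $\vec{M}_p$. The paper recycles machinery already built for Proposition \ref{tangentformula}: each $\tilde{M}_i$ is a contraction of $\tilde{K}$ in a suitable metric, so $\tilde{M}_p$ has a unique fixed direction in $\tilde{K}$, giving an eigenvector of $\vec{M}_p$ in $\vec{K}$ with positive eigenvalue $\lambda_p$; the second eigenvalue is then real and positive because $\det \vec{M}_p = (\tfrac{3}{25})^n > 0$, and $\lambda_p > \mu_p$ is read off from the position of the second eigendirection outside $\vec{K}$ together with $\vec{M}_p(\vec{K}) \subsetneq \vec{K}$. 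You instead give a self-contained Perron--Frobenius argument on the cone $\vec{K}'$, with a case split between constant and non-constant periods and a right-to-left ray-tracking argument to get strict positivity of the entries in an adapted basis. Both are sound; the paper's route is shorter in context because the contraction estimate is already available, while yours is independent of Section \ref{sec:direction} and makes the strictness of the spectral gap completely explicit through the discriminant $(a-d)^2 + 4bc > 0$. Your closing remarks --- that the Perron root really is the norm growth rate, and that the two binary expansions of a dyadic rational give the same exponent --- address details the paper glosses over (the latter is covered there by Example \ref{Dyadic rational}). One typographical caveat: in the first sentence above, replace the placeholder by the actual label of the Hölder exponent formula, namely Proposition \ref{Hölder exponent general formula}.
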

In practice, since we know the determinant of $\vec{M}_p$, we can calculate $\lambda_p$ by the formula
\[\lambda_{p} = \frac{1}{2}\left(\tr \vec{M}_{p} + \sqrt{ (\tr \vec{M}_{p})^2 - \left(\frac{3}{25}\right)^n }\right).\]

Note that Proposition \ref{Hölder exponent and derivative} also allows us to infer $u'(s)$. Indeed, we shall see in the next section that we never have $\alpha(u, s) = 1$.

To prove the theorem, we shall need the following lemma:
\begin{lem}
\label{Reduction to periodic case}
Let $s \in [0, 1]$, let $w = a_1 \ldots a_n$ be a word on the alphabet $\{0, 1\}$. Then we have:
\[\alpha_{\inf}\big(u, h_w(s)\big) = \alpha_{\inf}\big(u, s\big);\]
\[\alpha_{\sup}\big(u, h_w(s)\big) = \alpha_{\sup}\big(u, s\big).\]
\end{lem}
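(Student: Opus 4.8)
The plan is to deduce both identities directly from the matrix-product formula of Proposition \ref{Hölder exponent general formula}, which reduces each Hölder exponent to the asymptotics of $\ln\|\vec{M}_{a_1\ldots a_k}\|$ along the binary expansion. First I would record the effect of $h_w$ on binary expansions: if $s = 0.b_1b_2\ldots$, then $h_w(s) = 0.a_1\ldots a_n b_1 b_2\ldots$, so the expansion of $h_w(s)$ is just that of $s$ prefixed by $w$. Consequently, for a length-$(n+m)$ prefix of $h_w(s)$ the associated operator factors as $\vec{M}_{a_1\ldots a_n b_1\ldots b_m} = \vec{M}_w\,\vec{M}_{b_1\ldots b_m}$, and I only have to understand how prepending the \emph{fixed} factor $\vec{M}_w$ perturbs the norm.

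Since each $\vec{M}_i$ is invertible on $\vec{\mathcal{H}}$ (its eigenvalues $\tfrac35,\tfrac15$ are nonzero), so is $\vec{M}_w$, and submultiplicativity of the operator norm in both directions gives
\[
\frac{1}{\|\vec{M}_w^{-1}\|}\,\|\vec{M}_{b_1\ldots b_m}\|\;\le\;\|\vec{M}_w\,\vec{M}_{b_1\ldots b_m}\|\;\le\;\|\vec{M}_w\|\,\|\vec{M}_{b_1\ldots b_m}\|.
\]
Hence $\ln\|\vec{M}_w\vec{M}_{b_1\ldots b_m}\| = \ln\|\vec{M}_{b_1\ldots b_m}\| + O(1)$, where the $O(1)$ is bounded by $\maxof(\ln\|\vec{M}_w\|,\ln\|\vec{M}_w^{-1}\|)$ and depends only on $w$. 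Substituting into the formula for $h_w(s)$, the Hölder quotient at index $n+m$ becomes
\[
\frac{\ln\|\vec{M}_{b_1\ldots b_m}\| + O(1)}{(n+m)\ln\frac12}.
\]

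The main obstacle is then purely about limits: I must verify that the additive $O(1)$ in the numerator and the shift of $n$ in the denominator leave $\liminf$ and $\limsup$ unchanged as $m\to\infty$. For this I would first show $\ln\|\vec{M}_{b_1\ldots b_m}\| = \Theta(m)$: the upper bound $\|\vec{M}_{b_1\ldots b_m}\|\le(\maxof_i\|\vec{M}_i\|)^m$ is submultiplicativity, and the lower bound follows from $\|\vec{M}_{b_1\ldots b_m}\|\ge\|\vec{M}_{b_1\ldots b_m}^{-1}\|^{-1}\ge(\maxof_i\|\vec{M}_i^{-1}\|)^{-m}$. Thus, setting $\beta_m := \frac{\ln\|\vec{M}_{b_1\ldots b_m}\|}{m\ln\frac12}$, the sequence $\beta_m$ is bounded, and a short computation gives
\[
\frac{\ln\|\vec{M}_{b_1\ldots b_m}\| + O(1)}{(n+m)\ln\frac12}\;=\;\beta_m\cdot\frac{m}{n+m}\;+\;\frac{O(1)}{(n+m)\ln\frac12}\;=\;\beta_m + o(1),
\]
because $\frac{m}{n+m}\to1$ while $\beta_m$ stays bounded, and the last term tends to $0$. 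Taking $\liminf_{m\to\infty}$ and $\limsup_{m\to\infty}$ of this expression, and invoking Proposition \ref{Hölder exponent general formula} once for $s$ (so that $\alpha_{\inf}(u,s)=\liminf_m\beta_m$ and $\alpha_{\sup}(u,s)=\limsup_m\beta_m$) and once for $h_w(s)$, yields the two claimed equalities. The only point requiring care is the boundedness of $\beta_m$, which is exactly what prevents the vanishing factor $\frac{m}{n+m}-1$ from interacting badly with the fluctuations of $\beta_m$.
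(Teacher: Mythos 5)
Your proof is correct and follows essentially the same route as the paper: factor out the fixed invertible prefix $\vec{M}_w$, observe that it perturbs $\ln\|\cdot\|$ by only $O(1)$, and conclude via Proposition \ref{Hölder exponent general formula}. You are in fact somewhat more explicit than the paper about the limit bookkeeping (the index shift from $m$ to $n+m$ and the boundedness of $\beta_m$), which the paper compresses into ``the result follows immediately''; the only cosmetic difference is that the paper works with $\|\vec{M}_{\cdots}(\vec{u}_0)\|$ for a fixed $\vec{u}_0\in\vec{K}$ rather than with operator norms.
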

\begin{proof}
Let $0.b_1b_2\ldots$ be the binary expansion of $s$; let $\vec{u}_0 \in \vec{K}$. It is well-known that $\frac{\|\vec{M}_{w}(\vec{u})\|}{\|\vec{u}\|}$ is bounded when $\vec{u}$ varies; since $\vec{M}_{w}$ is invertible, the reciprocal of that quantity is bounded as well; hence the quantity
\[\ln \big\| \vec{M}_{a_1 \ldots a_kb_1 \ldots b_n}(\vec{u}_0) \big\|\;
-\; \ln \big\| \vec{M}_{b_1 \ldots b_n}(\vec{u}_0) \big\|\]
is bounded. The result follows immediately from Proposition \ref{Hölder exponent general formula}.
\end{proof}
\begin{proof}[Proof of Theorem \ref{Hölder exponent formula}]
The lemma we have just shown allows us to drop the preperiod of the expansion of $s$: we may suppose that it is periodic. From the formula \eqref{eq:Hölderexpformula} it follows that we have:
\[\alpha_{\inf}(u, s) = \alpha_{\sup}(u, s)
= \lim_{k \to \infty}
   \frac 
   	{\ln \| \vec{M}_{p}^k \|}
   	{kn \ln \left(\frac{1}{2}\right)}\]
Now since $\vec{K}$ is stable by $\vec{M}_p$, we can also say that $\tilde{K}$ is stable by $\tilde{M}_p$. Since $\tilde{M}_p$ is contractive, this means that it has exactly one fixed point in $\tilde{K}$, which means that $\vec{M}_p$ has a unique eigenvector $\vec{v}_p$ lying in $\vec{K}$ and associated with a real positive eigenvalue $\lambda_p$ (we will justify in a minute that it is indeed the bigger one). Let $\mu_p$ be the other eigenvalue; since $\det(\vec{M}_p) = (\frac{3}{25})^n > 0$, it is also real positive, and thus the associated eigenvector $\vec{w}_p$ cannot lie in $\vec{K}$. Using once again the fact that $\vec{M}_p(\vec{K})$ is a proper subset of $\vec{K}$ and considering the positions of the relevant vectors relative to each other and to $\vec{K}$, it can easily be seen that $\lambda_p > \mu_p$. It follows that $\| \vec{M}_{p}^k \| \sim \lambda_p^k$, hence the result.
\end{proof}

\begin{exm}
\label{Dyadic rational}
It follows from Theorem \ref{Hölder exponent formula} that whenever $s$ is dyadic rational, we have $\alpha(u, s) = \frac{\ln \frac{3}{5}}{\ln \frac{1}{2}} \approx 0,737$ (indeed, in this case, $\lambda_p$ is simply the bigger eigenvalue of $\vec{M}_0$ or $\vec{M}_1$, namely $\frac{3}{5}$). In particular, we have in this case $u'(s) = \infty$.
\end{exm}

Note that Lemma \ref{Reduction to periodic case} is interesting in itself, even outside the case $s \in \mathbb{Q}$: it shows that changing a finite number of bits in the binary expansion of $s$ does not change its Hölder exponent. For completeness' sake, let us also mention the following (fairly obvious) symmetry of $u$. Let
\[P =
\begin{pmatrix}
0 & 1 & 0\\
1 & 0 & 0\\
0 & 0 & 1\\
\end{pmatrix}\]
(in Figure \ref{fig:ut}, it simply corresponds to the vertical symmetry axis). Then
\begin{prop}
For all $s \in [0, 1]$, we have $u(1-s) = Pu(s)$.
\end{prop}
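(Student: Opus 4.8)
The plan is to invoke the uniqueness clause of Proposition-Definition \ref{Definition of fabc} (equivalently, the uniqueness of the continuous solution of \eqref{eq:recrelu}). Concretely, I would set $v(s) := P\,u(1-s)$ and show that $v$ satisfies exactly the same three defining conditions as $u$. Since $v$ is manifestly continuous as a composition of continuous maps, uniqueness then forces $v = u$, and the claim follows by rewriting this with $P^{-1} = P$: from $u(s) = P\,u(1-s)$ we get $P\,u(s) = u(1-s)$.

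First I would check the boundary values. Since $P$ exchanges $e_0$ and $e_1$ (and fixes $e_\omega$), we obtain $v(0) = P\,u(1) = P e_1 = e_0 = u(0)$ and $v(1) = P\,u(0) = P e_0 = e_1 = u(1)$, matching the first line of \eqref{eq:recrelu}.

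Next I would verify the two functional equations. The key elementary observations are the identities $1 - h_0(s) = h_1(1-s)$ and $1 - h_1(s) = h_0(1-s)$, which express the fact that the reversal $s \mapsto 1-s$ interchanges the two homotheties. Combining these with the relations $u \circ h_i = M_i \circ u$ gives
\[
v\big(h_0(s)\big) = P\,u\big(h_1(1-s)\big) = P M_1\,u(1-s),\qquad
v\big(h_1(s)\big) = P\,u\big(h_0(1-s)\big) = P M_0\,u(1-s),
\]
whereas the right-hand sides of the desired equations read $M_0 \circ v(s) = M_0 P\,u(1-s)$ and $M_1 \circ v(s) = M_1 P\,u(1-s)$. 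Hence $v$ satisfies \eqref{eq:recrelu} precisely when the conjugation identities $P M_1 = M_0 P$ and $P M_0 = M_1 P$ hold; but both are equivalent to the single relation $P M_1 P = M_0$, since $P = P^{-1}$.

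This last identity is the only genuine computation in the proof, and it is where I would concentrate the verification: conjugating $M_1$ by $P$ swaps the first two rows and the first two columns, and one checks directly that this carries $M_1$ to $M_0$. I do not expect a real obstacle here — the symmetry under exchanging the roles of $0$ and $1$ is essentially built into the pair $(M_0, M_1)$ — so the only points requiring care are keeping the direction of the conjugation correct and confirming that $s \mapsto 1-s$ indeed swaps the two homotheties, as recorded above.
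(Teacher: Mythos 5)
Your proof is correct, and it takes a genuinely different route from the paper's. The paper proves the identity by plugging the conjugation relation $M_{1-a} = PM_aP$ directly into the explicit limit formula \eqref{eq:formulau}: writing $s = 0.a_1a_2\ldots$, it observes that $1-s = 0.\bar a_1 \bar a_2\ldots$ (with $\bar a = 1-a$), telescopes $M_{\bar a_1}\cdots M_{\bar a_n} = P M_{a_1}\cdots M_{a_n} P$ using $P^2 = 1$, and uses the stability of $K$ under $P$ to conclude that the limit with starting point $Pu_0$ is still $u(s)$. You instead argue by uniqueness: $v(s) := Pu(1-s)$ is continuous, satisfies the boundary conditions of \eqref{eq:recrelu}, and satisfies the two functional equations because $s \mapsto 1-s$ interchanges $h_0$ and $h_1$ and because $PM_1P = M_0$; uniqueness then gives $v = u$. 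Both arguments hinge on the same computation $PM_iP = M_{1-i}$, which you correctly identify as the only real verification and which does check out. Your version has the advantage of not touching binary expansions at all (so you never need to worry about the two expansions of a dyadic rational), and it only uses the uniqueness clause of Proposition-Definition \ref{Definition of fabc} rather than the later formula \eqref{eq:formulau}; the paper's version is slightly shorter once that formula is already available. Either proof is acceptable.
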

\begin{cor}
For all $s \in [0, 1]$, $\alpha_{\inf}(u, 1-s) = \alpha_{\inf}(u, s)$ and $\alpha_{\sup}(u, 1-s) = \alpha_{\sup}(u, s)$.
\end{cor}
\begin{proof}
Clearly, $P^2 = 1$, $M_1 = PM_0P^{-1} = PM_0P$, $M_0 = PM_1P$, and $K$ is stable by $P$. Let $s = 0.a_1a_2\ldots$ be its binary expansion; then, simply using \eqref{eq:formulau}, we have:
\begin{align*}
u(1-s) &= u(0.\bar{a}_1\bar{a}_2 \ldots)\\
       &= \lim_{n \to \infty}M_{\bar{a}_1 \ldots \bar{a}_n}u_0\\
       &= \lim_{n \to \infty}PM_{a_1 \ldots a_n}Pu_0\\
       &= Pu(s),\\
\end{align*}
where $\bar{a}$ is shorthand for $1 - a$ and $u_0 \in K$ is an arbitrary starting point.
\end{proof}

\subsection{Derivative of $u$}
\label{sec:derivative}

\begin{thm}
\label{no finite derivative}
For all $s \in [0, 1]$, $u'(s)$ is either $0$, $\infty$ or not defined at all.
\end{thm}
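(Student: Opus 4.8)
The plan is to argue by contradiction: assuming that $u'(s)$ exists and is a finite, nonzero vector $L$, I will derive $L = 0$. First I would dispose of the dyadic case, where such a derivative cannot even arise: if $s$ is dyadic (including the endpoints $0$ and $1$), then by Example \ref{Dyadic rational} we have $\alpha(u, s) = \frac{\ln(3/5)}{\ln(1/2)} < 1$, so Lemma \ref{Hölder exponent and derivative} already forces $u'(s) = \infty$. Hence I may assume that $s$ is not dyadic, so that for every $n$ the point $s$ lies \emph{strictly} inside the dyadic interval $[p_n, q_n]$, where $p_n = h_{a_1 \ldots a_n}(0)$ and $q_n = h_{a_1 \ldots a_n}(1) = p_n + 2^{-n}$.

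The heart of the proof is to extract, from the existence of $u'(s) = L$, convergence information about the rescaled matrix products $2^n \vec{M}_{a_1 \ldots a_n}$. Writing $b := e_1 - e_0 \in \vec{K}$, the divided difference of $u$ over the containing interval is exactly $2^n \vec{M}_{a_1 \ldots a_n}(b)$, since $u \circ h_{a_1 \ldots a_n} = M_{a_1 \ldots a_n} \circ u$ and $u(1)-u(0) = b$. Because $p_n < s < q_n$ strictly, a standard real-analysis argument (the divided difference is a convex combination of the two one-sided rates of change at $s$, each equal to $L$) gives $2^n \vec{M}_{a_1 \ldots a_n}(b) \to L$. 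Applying the same reasoning at level $n+1$ to \emph{both} dyadic halves of $[p_n, q_n]$ --- for the half not containing $s$ the divided difference is an affine, rather than convex, combination, but its coefficients stay bounded by $4$, so it still converges to $L$ --- I obtain that both $2^{n+1} \vec{M}_{a_1 \ldots a_n}\vec{M}_0(b)$ and $2^{n+1}\vec{M}_{a_1 \ldots a_n}\vec{M}_1(b)$ tend to $L$. Subtracting yields $2^n \vec{M}_{a_1 \ldots a_n}(\vec{y}) \to 0$, where $\vec{y} := \vec{M}_0(b) - \vec{M}_1(b)$; a direct computation gives $\vec{y} = \frac{1}{5}(-1,-1,2)$, which is nonzero and linearly independent from $b$.

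Since $b$ and $\vec{y}$ form a basis of $\vec{\mathcal{H}}$ and the images of both under $2^n \vec{M}_{a_1 \ldots a_n}$ converge (to $L$ and to $0$ respectively), the whole sequence of operators converges: $2^n \vec{M}_{a_1 \ldots a_n} \to \Lambda$, where $\Lambda$ is the rank-one map determined by $\Lambda(b) = L$ and $\Lambda(\vec{y}) = 0$, that is $\Lambda = L\,\phi$ for the nonzero functional $\phi$ with $\phi(b) = 1$ and $\phi(\vec{y}) = 0$. Now I would exploit self-similarity through the identity $2^{n+1}\vec{M}_{a_1 \ldots a_{n+1}} = 2\,\bigl(2^n \vec{M}_{a_1 \ldots a_n}\bigr)\,\vec{M}_{a_{n+1}}$: passing to the limit along a value $i \in \{0,1\}$ that occurs infinitely often among the digits $a_{n+1}$, and using that the shifted sequence $2^{n+1}\vec{M}_{a_1 \ldots a_{n+1}}$ shares the limit $\Lambda$, I get $\Lambda = 2\Lambda \vec{M}_i$. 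With $\Lambda = L\phi$ and $L \neq 0$ this forces $\phi \circ \vec{M}_i = \frac{1}{2}\phi$, making $\frac{1}{2}$ an eigenvalue of $\vec{M}_i$ --- impossible, since the eigenvalues of each $\vec{M}_i$ are $\frac{3}{5}$ and $\frac{1}{5}$. This contradiction shows $L = 0$, which completes the proof.

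The main obstacle I anticipate is obtaining genuine information in the contracted direction $\vec{y}$. A naive approach comparing $b$ with $\vec{M}_i(b)$ through Corollary \ref{Matrix-vector separation} fails, precisely because $\vec{y}$ lies in neither $\vec{K}$ nor $-\vec{K}$, so its image can be (and indeed is) negligible compared with $\|\vec{M}_{a_1 \ldots a_n}\|$; this is why one cannot simply compare norms and must instead track the full $2 \times 2$ operator and let it converge to a rank-one limit. The care needed lies in the ``far half'' divided difference (checking that the affine-combination coefficients remain bounded) and in the purely algebraic final step, which cleanly reduces the whole question to the fact that $\frac{1}{2}$ is not an eigenvalue of $\vec{M}_0$ or $\vec{M}_1$.
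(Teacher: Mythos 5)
Your proof is correct, and it takes a genuinely different route from the one in the paper, even though both arguments ultimately hinge on the rescaled products $2^n\vec{M}_{a_1 \ldots a_n}$ converging to a rank-one operator. The paper assumes only that $\lim_{t\to s}\|u(t)-u(s)\|/|t-s|=C\in(0,\infty)$, uses Corollary \ref{Matrix-vector separation} to show that $2^n\vec{M}_{a_1 \ldots a_n}$ is positively bounded in norm, extracts a convergent subsequence by compactness, and then derives a contradiction by producing a linear form $\Phi$ for which $T\mapsto\Phi\big(u(T)-u(S_\infty)\big)$ would have constant difference quotients on a nondegenerate interval --- impossible because, by Example \ref{Dyadic rational} and Lemma \ref{Link between u and uacb}, $\Phi\circ u$ has infinite derivative at suitable dyadic points of that interval. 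You instead get convergence of the \emph{whole} sequence $2^n\vec{M}_{a_1 \ldots a_n}$ directly from the straddle lemma on the nested dyadic intervals (no compactness, no ``positively bounded'' estimates), pin the limit down explicitly as $L\phi$ with $\phi(b)=1$ and $\phi(\vec{y})=0$, and close with a purely algebraic contradiction: the self-similarity relation forces $\tfrac{1}{2}$ to be an eigenvalue of some $\vec{M}_i$, whereas the eigenvalues are $\tfrac{3}{5}$ and $\tfrac{1}{5}$. Your version is more self-contained (the only external input beyond the functional equation is Example \ref{Dyadic rational}, used just to dispatch dyadic $s$) and in a sense sharper, since it identifies the limit operator rather than merely its rank; your observation that $\vec{y}=\vec{M}_0(b)-\vec{M}_1(b)$ lies outside $\pm\vec{K}$, so that norm comparisons give nothing and one must track the full operator, is exactly the right diagnosis. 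The one point worth making explicit is that you read ``$u'(s)$ finite and nonzero'' as convergence of the vector-valued difference quotient, whereas the paper assumes only that its \emph{norm} converges; the two hypotheses are equivalent here, because Lemma \ref{K stable by m} confines the difference quotients to the salient cone $\vec{K}$ and Corollary \ref{properties of Du} makes their direction converge, but a one-line remark to that effect would make your starting assumption match the theorem exactly.
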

\begin{proof}
Assume the opposite: let $s$ be a number in $[0, 1]$ such that $u'(s)$ exists, is finite and positive. Then we have, in particular:
\[\lim_{t \to s} \frac{\|u(t) - u(s)\|}{|t - s|} = C\]
for some finite positive constant $C$.

Let $s = 0.a_1a_2 \ldots$ be a binary expansion. Let $t_n$ be a sequence, taking values in $[0, 1]$, such that for all $n \in \mathbb{N}$, the binary expansion $t_n$ coincides with that of $s$ in the first $n$ places. Obviously, such a sequence converges to $s$. We then have, for all $n \in \mathbb{N}$:
\[\frac{\big\|u(t_n) - u(s)\big\|}{|t_n - s|}\;
=\; \frac{\big\|\vec{M}_{a_1 \ldots a_n}\big(u(T_n) - u(S_n)\big)\big\|}{2^{-n}|T_n - S_n|},\]
where $S_n = h_{a_1 \ldots a_n}^{-1}(s) = 0.a_{n+1}a_{n+2} \ldots$ and $T_n = h_{a_1 \ldots a_n}^{-1}(t_n)$. Since $[0, 1]$ is compact, we may extract from the natural numbers a subsequence $\phi(n)$ such that $S_{\phi(n)}$ converges to some value $S_{\infty}$.

Suppose that $S_{\infty} < 1$ (by reversing the order of $s$ and $t$, we can similarly treat the case $S_{\infty} > 0$). From now on, we shall require that $t_n > s$. By Corollary \ref{Matrix-vector separation}, we know that
\[\frac{\big\|\vec{M}_{a_1 \ldots a_n}\big\|\big\|\big(u(T_n) - u(S_n)\big)\big\|}
       {\big\|\vec{M}_{a_1 \ldots a_n}\big(u(T_n) - u(S_n)\big)\big\|}\]
is positively bounded. We also know that
\[\frac{\big\|\vec{M}_{a_1 \ldots a_n}\big(u(T_n) - u(S_n)\big)\big\|}{2^{-n}|T_n - S_n|}\]
tends to $C$, hence it is positively bounded as well. Suppose for a moment that $t_n$ is such that $T_n$ is identically equal to $1$. Then for sufficiently large $n$, $S_{\phi(n)}$ varies in a closed interval that does not contain $1$, and since $u$ is continuous and injective, $\frac{\|(u(T_{\phi(n)}) - u(S_{\phi(n)}))\|}{|T_{\phi(n)} - S_{\phi(n)}|}$ is positively bounded.

This means that $\frac{\vec{M}_{a_1 \ldots a_{\phi(n)}}}{2^{-\phi(n)}}$ is positively bounded in norm, so that it takes values in a compact space. Hence from $\phi(n)$ we can extract yet another subsequence, that we shall call $\psi(n)$, such that $\frac{\vec{M}_{a_1 \ldots a_{\psi(n)}}}{2^{-\psi(n)}}$ converges; let us call $\vec{M}_{\infty}$ its limit. Note that, though we used a particular value of $t_n$ to prove that this construction was possible, the actual objects that we have constructed do not depend on it.

Now let $T$ be any point in $(S_{\infty}, 1]$; we set $t_n = h_{a_1 \ldots a_n}(T)$, so that for all $n$, $T_n = T$. We then have:
\begin{align*}
\lim_{n \to \infty} \frac{\big\|u(t_{\psi(n)}) - u(s)\big\|}{|t_{\psi(n)} - s|}\;
&=\; \lim_{n \to \infty} \frac{\big\|\vec{M}_{a_1 \ldots a_{\psi(n)}}\big(u(T) - u(S_{\psi(n)})\big)\big\|}
                            {2^{-{\psi(n)}}|T - S_{\psi(n)}|}\\
&=\; \frac{\big\|\vec{M}_{\infty}\big(u(T) - u(S_{\infty})\big)\big\|}{|T - S_{\infty}|},\\
\end{align*}
which means that the latter ratio is equal to $C$ regardless of the value of $T$.

This claim is definitely too strong to be true, and we shall soon prove it is absurd. Note that we have
\[\det\left(\frac{\vec{M}_{a_1 \ldots a_{\psi(n)}}}{2^{-\psi(n)}}\right) = \det\left(2\vec{M}_i\right)^{\psi(n)} = \left(\tfrac{12}{25}\right)^{\psi(n)},\]
which tends to $0$ as $n$ increases. Hence $\det\left(\vec{M}_{\infty}\right) = 0$, in other words, $\vec{M}_{\infty}$ has rank $1$. The claim can then be reformulated as follows: there exists some linear form $\Phi$ such that the ratio
\[\frac{\Phi\big(u(T) - u(S_{\infty})\big)}{T - S_{\infty}}\]
is constant. The interval $(S_{\infty}, T)$ contains infinitely many dyadic rational points; by Corollary \ref{properties of Du}, we may find in this interval a dyadic rational value $T'$ such that $\Phi\big(\vec{D}u(T')\big) \neq 0$. But Example \ref{Dyadic rational} together with Lemma \ref{Link between u and uacb} tell us that in this case, $\Phi\big(u(T) - u(S_{\infty})\big)$ as a function of $T$ has an infinite derivative in $T'$. So must the whole ratio: contradiction.

Now let $s \in [0, 1] \cap \mathbb{Q}$, and suppose $u'(s)$ is undefined. Let us apply Theorem \ref{Hölder exponent formula}: it is only possible if $\alpha(u, s) = 1$. Let $p$ be the period of the binary expansion of $s$, $n$ the length of the period; then we have $\lambda_{p} = (\frac{1}{2})^n$. We know that $M_{p}$ has determinant $(\frac{3}{25})^n$ and that one of its eigenvalues is equal to $1$. Hence $\tr M_{p} = 1 + (\frac{1}{2})^n + (\frac{6}{25})^n$, and $\tr 25^nM_{p} = 25^n + 6^n + (\frac{25}{2})^n$. On the other hand, we know that in the canonical basis, $25M_0$ and $25M_1$ are given by matrices with integer coefficients, which means that so is $25^nM_{p}$, hence $\tr 25^nM_{p}$ must be integer. Contradiction.
\end{proof}

\begin{cor}
For Lebesgue-almost all $s \in [0, 1]$, $u'(s) = 0$.
\end{cor}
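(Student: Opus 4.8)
The plan is to push everything onto the random matrix product $\vec{M}_{a_1\ldots a_n}$ and to prove that the Hölder exponent is almost surely strictly greater than $1$; Lemma \ref{Hölder exponent and derivative} will then immediately give $u'(s)=0$. First I would note that under Lebesgue measure the binary digits $a_1, a_2, \ldots$ are i.i.d., uniform on $\{0,1\}$. The sequence $s \mapsto \ln\|\vec{M}_{a_1\ldots a_n}\|$ is subadditive (submultiplicativity of the operator norm together with the shift-invariance of the digit sequence), so Kingman's subadditive ergodic theorem yields an almost sure, constant limit
\[
\lambda := \lim_{n\to\infty}\frac{1}{n}\ln\|\vec{M}_{a_1\ldots a_n}\| = \inf_n \frac{1}{n\,2^n}\sum_{|w|=n}\ln\|\vec{M}_w\|.
\]
By Proposition \ref{Hölder exponent general formula} this forces $\alpha_{\inf}(u,s)=\alpha_{\sup}(u,s)=\alpha_0$ for almost every $s$, with common value $\alpha_0 = \lambda/\ln\tfrac{1}{2}$. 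It therefore suffices to prove $\alpha_0 > 1$, i.e.\ $\lambda < -\ln 2$.

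Next I would fix the average scale of $\|\vec{M}_{a_1\ldots a_n}\|$ using the geometry of the curve. Since $u(1)-u(0)=e_1-e_0\in\vec{K}$, Corollary \ref{Matrix-vector separation} gives $\|\vec{M}_w\| \asymp \|\vec{M}_w(u(1)-u(0))\|$ uniformly over words $w$ of length $n$; summing, $\Sigma_n := \sum_{|w|=n}\|\vec{M}_w\|$ is comparable to the length of the $n$-th polygonal approximation of $u([0,1])$, whose vertices are the $u(k/2^n)$. By Corollary \ref{properties of Du} this curve is $C^1$, hence rectifiable of finite length $\Lambda$, and its inscribed polygonal lengths increase to $\Lambda$; thus $\Sigma_n$ is positively bounded. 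Consequently $\frac{1}{2^n}\sum_{|w|=n}\|\vec{M}_w\| = 2^{-n}\Sigma_n \asymp 2^{-n}$, and the concavity of the logarithm (discrete Jensen) gives $\frac{1}{2^n}\sum_{|w|=n}\ln\|\vec{M}_w\| \leq \ln\bigl(2^{-n}\Sigma_n\bigr)$; dividing by $n$ and letting $n\to\infty$ yields $\lambda \leq -\ln 2$, that is $\alpha_0 \geq 1$.

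The hard part will be upgrading this to the strict inequality $\lambda < -\ln 2$. Equality in the Jensen step would force the normalized log-norms $\frac{1}{n}\ln\|\vec{M}_{a_1\ldots a_n}\|$ to have no exponential-scale fluctuation, i.e.\ the variance in the central limit theorem for $\ln\|\vec{M}_{a_1\ldots a_n}\|$ to vanish. This fails here: the semigroup generated by $\vec{M}_0$ and $\vec{M}_1$ is non-compact (the eigenvalues $\tfrac{3}{5},\tfrac{1}{5}$ are off the unit circle), strongly irreducible (the only finite invariant sets of lines of each $\vec{M}_i$ are its two distinct eigenlines, and the two pairs $\{\langle\vec{v}_0\rangle,\langle\vec{w}_0\rangle\}$, $\{\langle\vec{v}_1\rangle,\langle\vec{w}_1\rangle\}$ are disjoint), and contracting on $\mathbb{P}^1(\mathbb{R})$ — the last two being exactly what was established in the proof of Proposition \ref{tangentformula}. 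By the Le Page/Furstenberg theory the top Lyapunov exponent is then non-degenerate, so the moment function $p\mapsto\lim\frac1n\ln\frac1{2^n}\sum_{|w|=n}\|\vec M_w\|^p$ is strictly convex near $0$ and the inequality $\lambda < -\ln 2$ is strict. A concrete symptom of the missing degeneracy is that $w=0^n$ already gives $\|\vec{M}_0^n\|\asymp(\tfrac{3}{5})^n = 2^{-\beta n}$ with $\beta=\ln\tfrac{3}{5}/\ln\tfrac{1}{2}\approx 0.737$ (cf.\ Example \ref{Dyadic rational}), a norm vastly larger than the mean scale $2^{-n}$; such rare large-norm words inflate the average and push the typical exponent strictly above $1$. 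Alternatively — and this is presumably the route matching Section \ref{sec:numerical} — one computes $\frac1{n\,2^n}\sum_{|w|=n}\ln\|\vec{M}_w\|$ numerically for one moderate $n$ and checks it is already below $-\ln 2$, which by the infimum in the first display bounds $\lambda$ accordingly. Either way $\alpha_0>1$, and Lemma \ref{Hölder exponent and derivative} gives $u'(s)=0$ for almost every $s$.
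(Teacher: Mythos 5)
Your route is genuinely different from the paper's, and it breaks down at exactly the point the paper itself warns is hard. Everything up to $\alpha_0\geq 1$ is sound: the digits are i.i.d.\ under Lebesgue measure, Kingman gives an a.s.\ constant Lyapunov exponent $\lambda$, Proposition \ref{Hölder exponent general formula} converts it into an a.s.\ constant Hölder exponent, and your claim that $\Sigma_n=\sum_{|w|=n}\|\vec M_w\|$ is positively bounded is correct (Corollary \ref{Matrix-vector separation} applied to $u(1)-u(0)=\tfrac23(\vec v_0+\vec v_1)\in\vec K$, plus the fact that all chords of the curve lie in the salient cone $\vec K$, which uniformly bounds the inscribed polygon lengths). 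But the entire content of the corollary sits in the \emph{strict} inequality $\lambda<-\ln 2$: if $\alpha_{\inf}=1$, Lemma \ref{Hölder exponent and derivative} gives nothing. Neither of your two proposed ways to obtain strictness is a proof as written. The hypotheses you verify for the Le Page/Furstenberg machinery (non-compactness, strong irreducibility, contraction) are the hypotheses under which the central limit theorem for $\ln\|\vec M_{a_1\ldots a_n}\|$ \emph{holds}; they do not by themselves give $\sigma^2>0$, which is what you need to rule out affineness of the pressure function on $[0,1]$ and hence to separate $\Lambda'(0)=\lambda$ from $\Lambda(1)=-\ln 2$. Positivity of the variance requires an extra non-degeneracy (non-arithmeticity of the norm cocycle); it does hold here, but must be checked --- for instance by observing that the per-letter spectral exponent is not constant on the semigroup, since the top eigenvalue of $\vec M_0$ is $\tfrac35$ while that of $\vec M_0\vec M_1$ is $\tfrac{7+\sqrt{13}}{50}\neq\left(\tfrac35\right)^2$. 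You would also need to justify the identity $\Lambda'(0^+)=\lambda$, itself a nontrivial theorem. Your fallback (compute $\tfrac1{n2^n}\sum_{|w|=n}\ln\|\vec M_w\|$ for one explicit $n$ and check it is below $-\ln 2$) would indeed be a legitimate finite certificate via the infimum characterization, but you have not carried it out, and nothing in the paper does it for you: Section \ref{sec:numerical} explicitly states that lower bounds on the Hölder exponent are hard, and the paper never proves that the a.s.\ exponent exceeds $1$.

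The paper avoids all of this with a three-line argument: pick a linear form $\Phi$ whose kernel misses $\vec K$; by Lemma \ref{K stable by m} the scalar function $\Phi\circ u$ is monotonic, hence by Lebesgue's theorem has a finite derivative almost everywhere; Lemma \ref{Link between u and uacb} transfers this to $u$, and Theorem \ref{no finite derivative} (no finite nonzero derivative anywhere) then forces $u'(s)=0$ at every such point. If you complete the non-arithmeticity check or the finite computation, your argument proves something strictly stronger --- that the Hölder exponent is a.s.\ equal to a single constant $\alpha_0>1$ --- but as it stands the decisive step is asserted rather than proved.
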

\begin{proof}
Let $\Phi$ be a linear form whose kernel does not meet $\vec{K}$; then from Corollary \ref{properties of Du}, it is easy to deduce that the function $\Phi \circ u$ is monotonic. But it is a well-known result (see for example \cite{rudin}, Theorem 8.19) that a monotonic function is differentiable (in its usual sense of ``has a finite derivative'') almost everywhere. Since by Lemma \ref{Link between u and uacb}, $\Phi \circ u$ behaves in the same way as $u$ itself, the result follows.
\end{proof}

\subsection{Numerical values and estimations of Hölder exponent}
\label{sec:numerical}

Here is a table of all possible periods shorter than or equal to 7 (up to cyclic permutation), ordered by Hölder exponent:

\vspace{10 pt}
\begin{tabular}{|c|c|c|c|c|}
\hline
$s$ & Period $p$ & Length $n$ & $5^n \tr \vec{M}_{p}$ & Approximate value of $\alpha$ \\ \hline
$\frac{1}{3}$     & 01      & 2 & 7    & 1.119 \\ \hline
$\frac{21}{127}$  & 0010101 & 7 & 388  & 1.096 \\ \hline
$\frac{11}{63}$   & 001011  & 6 & 175  & 1.086 \\ \hline
$\frac{5}{31}$    & 00101   & 5 & 76   & 1.085 \\ \hline
$\frac{1}{5}$     & 0011    & 4 & 34   & 1.078 \\ \hline
$\frac{19}{127}$  & 0010011 & 7 & 436  & 1.072 \\ \hline
$\frac{11}{127}$  & 0001011 & 7 & 472  & 1.055 \\
$\frac{13}{127}$  & 0001101 & 7 & 472  & 1.055 \\ \hline
$\frac{1}{7}$     & 001     & 3 & 16   & 1.050 \\ \hline
$\frac{3}{31}$    & 00011   & 5 & 88   & 1.040 \\ \hline
$\frac{5}{63}$    & 000101  & 6 & 211  & 1.039 \\ \hline
$\frac{1}{9}$     & 000111  & 6 & 223  & 1.025 \\ \hline
$\frac{9}{127}$   & 0001001 & 7 & 580  & 1.012 \\ \hline \hline
$\frac{5}{127}$   & 0000101 & 7 & 616  & 0.999 \\ \hline
$\frac{1}{21}$    & 000011  & 6 & 250  & 0.997 \\ \hline
$\frac{7}{127}$   & 0000111 & 7 & 628  & 0.995 \\ \hline
$\frac{1}{15}$    & 0001    & 4 & 43   & 0.982 \\ \hline
$\frac{3}{127}$   & 0000011 & 7 & 736  & 0.962 \\ \hline
$\frac{1}{31}$    & 00001   & 5 & 124  & 0.936 \\ \hline
$\frac{1}{63}$    & 000001  & 6 & 367  & 0.903 \\ \hline
$\frac{1}{127}$   & 0000001 & 7 & 1096 & 0.880 \\ \hline
0                 & 0       & 1 & 4    & 0.737 \\ \hline
\end{tabular}
\vspace{10 pt}

When we look at this table, it becomes apparent that globally, numbers whose binary expansions contain long clusters of equal consecutive bits tend to have smaller Hölder exponents, while those whose binary expansions are more ``mixed'' tend to have bigger Hölder exponents. The exponent is lowest when the binary expansion ends in an unbroken string of zeroes or ones (that is, for dyadic rational numbers --- see below), and seems highest when zeroes and ones strictly alternate (that is, for $\frac{1}{3}$ and its dyadic-rational multiples; the author does not know how to prove this). The author was able to prove the following statement, that partly confirms this intuition:
\begin{prop}
\label{bound on exponent}
Let $s \in [0, 1]$, $s = 0.a_1a_2 \ldots$ its binary expansion. Let us define
\[d_{\inf,\, \sup}(s)\; :=\; \operatorname*{lim\,inf,\,sup}_{n \to \infty} \frac{1}{n} \sum_{i=1}^{n} |a_i - a_{i+1}|.\]
We then have:
\begin{align*}
\alpha_{\inf}(u, s)\;
&\leq\; \frac{\ln \frac{3}{5}}{\ln \frac{1}{2}} + d_{\inf}(s) ;\\
\alpha_{\sup}(u, s)\;
&\leq\; \frac{\ln \frac{3}{5}}{\ln \frac{1}{2}} + d_{\sup}(s).
\end{align*}
\end{prop}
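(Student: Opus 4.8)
The plan is to bound the Hölder exponents from above by bounding the matrix norm $\|\vec{M}_{a_1\ldots a_n}\|$ from \emph{below}. Writing $t_n := \sum_{i=1}^{n}|a_i - a_{i+1}|$ for the number of bit-transitions, Proposition \ref{Hölder exponent general formula} tells us it suffices to establish a pointwise estimate of the form
\[\|\vec{M}_{a_1 \ldots a_n}\|\; \geq\; c\,\left(\tfrac{3}{5}\right)^n 2^{-t_n}\]
for some constant $c > 0$ independent of $n$. Taking logarithms and dividing by the negative quantity $n\ln\frac12$ (which reverses the inequality and turns the factor $2^{-t_n}$ into an additive term $\frac{t_n}{n}$) gives $\frac{\ln\|\vec{M}_{a_1\ldots a_n}\|}{n\ln\frac12} \le \frac{\ln\frac35}{\ln\frac12} + \frac{t_n}{n} + o(1)$. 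Since $\liminf$ and $\limsup$ are monotone and insensitive to the $o(1)$ term, passing to $\liminf_n$ and $\limsup_n$ yields exactly the two claimed inequalities, as $\liminf_n \frac{t_n}{n} = d_{\inf}(s)$ and $\limsup_n \frac{t_n}{n} = d_{\sup}(s)$.

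By Corollary \ref{Matrix-vector separation}, $\|\vec{M}_{a_1\ldots a_n}\|$ is bounded below by a positive multiple of $\|\vec{M}_{a_1\ldots a_n}(\vec{u}_0)\|$ for any fixed unit $\vec{u}_0 \in \vec{K}$, so it is enough to bound the latter. I would work throughout in the Euclidean norm on $\vec{\mathcal{H}}$: since $\vec{v}_i \perp \vec{w}_i$, each $\vec{M}_i$ is self-adjoint with eigenvalues $\frac35 > \frac15 > 0$, so $\|\vec{M}_i\|_2 = \frac35$ and, crucially, $\vec{M}_i$ is diagonal in an orthonormal eigenbasis. I would then group $a_1\ldots a_n$ into its maximal blocks of equal consecutive bits. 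For a block of $k$ copies of $\vec{M}_i$ applied to a unit vector $\hat{x}$ making angle $\phi$ with $\vec{v}_i$, the diagonal form gives at once
\[\|\vec{M}_i^{\,k}(\hat{x})\|_2^2\; =\; \left(\tfrac35\right)^{2k}\cos^2\phi\; +\; \left(\tfrac15\right)^{2k}\sin^2\phi\; \geq\; \left(\tfrac35\right)^{2k}\cos^2\phi,\]
hence $\|\vec{M}_i^{\,k}(\hat{x})\|_2 \geq \left(\tfrac35\right)^{k}|\cos\phi|$.

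The key geometric input, which pins down the coefficient $1$ in front of $d$, is that $\vec{v}_0$ and $\vec{v}_1$ make an angle of exactly $60^\circ$ (a one-line computation: $\langle \vec{v}_0, \vec{v}_1\rangle / (\|\vec{v}_0\|\,\|\vec{v}_1\|) = \frac{3/4}{3/2} = \frac12$). Since $\vec{K}$ is the cone spanned by $\vec{v}_0$ and $\vec{v}_1$, every direction in $\vec{K}$ lies within $60^\circ$ of each $\vec{v}_i$, so $|\cos\phi| \geq \frac12$. Because $\vec{K}$ is stable under the $\vec{M}_i$, the vector entering each successive block again lies in $\vec{K}$; thus each block contributes a factor at least $\frac12\left(\tfrac35\right)^{k}$ to the norm, the $\frac12$ being the promised per-transition penalty. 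Multiplying over all blocks --- there are at most $t_n + 1$ of them, since each block boundary is a transition --- and using that the block lengths sum to $n$, I obtain $\|\vec{M}_{a_1\ldots a_n}(\vec{u}_0)\|_2 \geq 2^{-(t_n+1)}\left(\tfrac35\right)^n$, which is the required estimate.

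The main obstacle is not any single hard step but getting the bookkeeping exactly right: one must verify that the per-transition loss really is at most a factor of $2$ and that no further multiplicative constants accumulate block by block. The clean bound $\|\vec{M}_i^{\,k}(\hat{x})\|_2 \geq \left(\tfrac35\right)^k|\cos\phi|$, valid for \emph{every} $k$ with no stray constant, is precisely what prevents such accumulation; and the exact value $\cos 60^\circ = \frac12$ is what makes the penalty match $2$ rather than some larger constant, which would have produced a coefficient greater than $1$ on $d$. One should also note that the estimate is automatically sharp for dyadic rationals, where $t_n/n \to 0$ and the bound degenerates to $\alpha = \frac{\ln\frac35}{\ln\frac12}$, in agreement with Example \ref{Dyadic rational}.
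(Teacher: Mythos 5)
Your proof is correct and follows essentially the same route as the paper's: the same reduction via Proposition \ref{Hölder exponent general formula} and Corollary \ref{Matrix-vector separation}, the same decomposition into maximal blocks of equal bits, and the same per-block estimate $\|\vec{M}_i^{\,k}(\vec{u})\|_2 \geq \frac{1}{2}(\frac{3}{5})^k\|\vec{u}\|_2$ for $\vec{u}\in\vec{K}$ (the paper's Lemma \ref{Technical lemma for upper bound}). Your derivation of that lemma via the $60^\circ$ opening angle of $\vec{K}$ and $\cos^2\phi\geq\frac14$ is just a geometric rephrasing of the paper's computation with the coordinate condition $0\leq b/a\leq 3$ in the orthogonal eigenbasis, since $\tan\phi = |b/a|\,\|\vec{w}_i\|_2/\|\vec{v}_i\|_2 = |b/a|/\sqrt{3}\leq\sqrt{3}$.
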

Informally, $d_{\inf}(s)$ and $d_{\sup}(s)$ give bounds for the asymptotic proportion of places where the bit changes value; the latter can also be understood as the reciprocal of the asymptotic average length of blocs of consecutive equal bits.
\begin{cor}
If $d_{\sup}(s) < 1 - \frac{\ln \frac{3}{5}}{\ln \frac{1}{2}} \approx 0.263 \approx \frac{1}{3.802}$, then $u'(s) = \infty$.
\end{cor}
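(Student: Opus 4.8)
The plan is to obtain this corollary by directly chaining the two results that immediately precede it, namely the upper bound of Proposition \ref{bound on exponent} and the differentiability criterion of Lemma \ref{Hölder exponent and derivative}. The hypothesis on $d_{\sup}(s)$ has evidently been arranged precisely so that these two statements fit together, so the work is purely a matter of substitution; I expect no genuine obstacle.

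First I would invoke the second inequality of Proposition \ref{bound on exponent}, which gives $\alpha_{\sup}(u, s) \leq \frac{\ln \frac{3}{5}}{\ln \frac{1}{2}} + d_{\sup}(s)$. Then I would feed in the hypothesis $d_{\sup}(s) < 1 - \frac{\ln \frac{3}{5}}{\ln \frac{1}{2}}$, which forces $\frac{\ln \frac{3}{5}}{\ln \frac{1}{2}} + d_{\sup}(s) < 1$ and hence $\alpha_{\sup}(u, s) < 1$. The one point to be mildly careful about is that the strict inequality in the hypothesis is what produces a strict inequality $\alpha_{\sup}(u, s) < 1$, as opposed to $\leq 1$; this strictness is exactly what the next step requires.

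Finally I would apply the second line of Lemma \ref{Hölder exponent and derivative}, whose first implication reads $\alpha_{\sup}(u, s) < 1 \implies u'(s) = \infty$. Combining this with the strict inequality just obtained yields $u'(s) = \infty$, which is the claim. The entire argument is thus a two-step deduction with no computation beyond the substitution $1 - \frac{\ln \frac{3}{5}}{\ln \frac{1}{2}} \approx 0.263$ already recorded in the statement, so there is really nothing that plays the role of a ``hard part''; the content lies entirely in Proposition \ref{bound on exponent}, which the corollary merely specializes.
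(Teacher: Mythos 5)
Your proof is correct and is exactly the argument the paper intends (the paper states this corollary without proof precisely because it is the immediate combination of Proposition \ref{bound on exponent} with the implication $\alpha_{\sup}(u,s)<1\implies u'(s)=\infty$ from Lemma \ref{Hölder exponent and derivative}). Nothing is missing.
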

To prove the proposition, we will need a small technical lemma:
\begin{lem}
\label{Technical lemma for upper bound}
For any $\vec{u} \in \vec{K}$, $n \in \mathbb{N}$ and $i \in \{0, 1\}$, we have
\[\frac{\big\| \vec{M}_i^n(\vec{u}) \big\|_2}{\big\| \vec{u} \big\|_2}\; \geq\; \frac{1}{2}\left(\frac{3}{5}\right)^n.\]
\end{lem}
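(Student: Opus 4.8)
The plan is to work in the orthogonal eigenbasis of $\vec{M}_0$, treating the case $i=0$ first (the case $i=1$ will follow by the symmetry $\vec{M}_1 = P\vec{M}_0P$). Since $\vec{v}_0$ and $\vec{w}_0$ are orthogonal eigenvectors of $\vec{M}_0$ for the eigenvalues $\frac35$ and $\frac15$, writing $\vec{u} = \alpha\vec{v}_0 + \beta\vec{w}_0$ gives, by the Pythagorean theorem,
\[
\|\vec{u}\|_2^2 = \alpha^2\|\vec{v}_0\|_2^2 + \beta^2\|\vec{w}_0\|_2^2,
\qquad
\big\|\vec{M}_0^n(\vec{u})\big\|_2^2 = \left(\tfrac35\right)^{2n}\alpha^2\|\vec{v}_0\|_2^2 + \left(\tfrac15\right)^{2n}\beta^2\|\vec{w}_0\|_2^2.
\]
First I would record the two numerical ingredients that make the estimate work: the identity $\|\vec{v}_0\|_2^2 = 3\|\vec{w}_0\|_2^2$ (immediate from the coordinates of $\vec{v}_0$ and $\vec{w}_0$), and the description of the cone $\vec{K}$ in this basis.

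For the latter I would express the second generator $\vec{v}_1$ of $\vec{K}$ in the basis $(\vec{v}_0, \vec{w}_0)$; a short computation gives $\vec{v}_1 = \tfrac12\vec{v}_0 + \tfrac32\vec{w}_0$. Hence a generic element $a\vec{v}_0 + b\vec{v}_1$ of $\vec{K}$ (with $a,b\ge 0$) has coordinates $\alpha = a + \tfrac b2$, $\beta = \tfrac{3b}{2}$, from which one reads off that every $\vec{u}\in\vec{K}$ satisfies $\alpha \ge 0$ and $0 \le \beta \le 3\alpha$. This is the crucial constraint: it bounds the component of $\vec{u}$ along the \emph{contracting} direction $\vec{w}_0$ in terms of its component along the \emph{expanding} direction $\vec{v}_0$, which is exactly what prevents the image from collapsing.

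With these two facts the estimate is essentially forced. Discarding the nonnegative $\vec{w}_0$-term leaves $\big\|\vec{M}_0^n(\vec{u})\big\|_2^2 \ge (\tfrac35)^{2n}\alpha^2\|\vec{v}_0\|_2^2$, while the identity $\|\vec{v}_0\|_2^2 = 3\|\vec{w}_0\|_2^2$ together with $\beta \le 3\alpha$ gives
\[
\|\vec{u}\|_2^2 = \|\vec{w}_0\|_2^2\,(3\alpha^2 + \beta^2) \le \|\vec{w}_0\|_2^2 \cdot 12\alpha^2 = 4\alpha^2\|\vec{v}_0\|_2^2.
\]
Combining the two and taking square roots yields exactly $\big\|\vec{M}_0^n(\vec{u})\big\|_2 \ge \tfrac12(\tfrac35)^n\|\vec{u}\|_2$; the constant $\tfrac12$ is sharp, being attained in the limit along the extreme ray $\vec{v}_1$ of $\vec{K}$. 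Finally, for $i=1$ I would invoke $\vec{M}_1 = P\vec{M}_0P$ with $P$ an orthogonal involution: since $P$ preserves $\|\bullet\|_2$ and maps $\vec{K}$ onto $-\vec{K}$, and since the ratio in question is unchanged under $\vec{u}\mapsto -\vec{u}$, the bound for $\vec{M}_1$ reduces to the one just proved. I do not expect a genuine obstacle here — the argument is a finite-dimensional computation — and the only delicate point is pinning down the cone inequality $\beta \le 3\alpha$ with the correct orientation, so that it is truly the contracting direction that gets controlled.
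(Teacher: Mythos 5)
Your proof is correct and follows essentially the same route as the paper: decompose $\vec{u}$ in the orthogonal eigenbasis $(\vec{v}_i,\vec{w}_i)$, use $\|\vec{v}_i\|_2^2=3\|\vec{w}_i\|_2^2$ together with the cone constraint $|\beta/\alpha|\le 3$ to bound the contracting component, and conclude; your inequality $\|\vec{u}\|_2^2\le 4\alpha^2\|\vec{v}_0\|_2^2$ is exactly the paper's bound $\tfrac{1}{1+\frac13(b/a)^2}\ge\tfrac14$. The only cosmetic difference is that the paper records the cone description in both eigenbases and treats $i=0,1$ uniformly, whereas you handle $i=1$ via the (valid) symmetry $\vec{M}_1=P\vec{M}_0P$ with $P(\vec{K})=-\vec{K}$.
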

\begin{proof}
Let us first express $\vec{K}$ in the bases that diagonalise $\vec{M}_i$. A quick calculation shows that we have:
\[\vec{K}\; =\; \setsuch{ a\vec{v}_0 + b\vec{w}_0 }{ 0 \leq \frac{b}{a} \leq 3 }\; =\; \setsuch{ a\vec{v}_1 + b\vec{w}_1 }{ 0 \geq \frac{b}{a} \geq -3 }\]
Let $i \in \{0, 1\}$ and $\vec{u} = a\vec{v}_i + b\vec{w}_i \in \vec{K}$. We then have $\vec{M}_i(\vec{u}) = \frac{3}{5}a\vec{v}_i + \frac{1}{5}b\vec{w}_i$. Now let $n \in \mathbb{N}$; we have (since $\vec{v}_i$ and $\vec{w}_i$ are orthogonal):
\begin{align*}
\left(\frac{\big\| \vec{M}_i^n(\vec{u}) \big\|_2}
           {\big\| \vec{u} \big\|_2}\right)^2\;
  &=\; \frac{\big\| \left(\frac{3}{5}\right)^n a\vec{v}_i + \left(\frac{1}{5}\right)^n b\vec{w}_i \big\|_2^2}
            {\big\| a\vec{v}_i + b\vec{w}_i \big\|_2^2} \\
  &=\; \frac{\left(\frac{3}{5}\right)^{2n} \big\| a\vec{v}_i \big\|_2^2 +
             \left(\frac{1}{5}\right)^{2n} \big\| b\vec{w}_i \big\|_2^2}
            {\big\| a\vec{v}_i \big\|_2^2 + \big\| b\vec{w}_i \big\|_2^2} \\
  &\geq\; \left(\tfrac{3}{5}\right)^{2n} \frac{\big\| a\vec{v}_i \big\|_2^2}
                                              {\big\| a\vec{v}_i \big\|_2^2 +
                                               \big\| b\vec{w}_i \big\|_2^2} \\
  &=\; \left(\tfrac{3}{5}\right)^{2n} \frac{1}{1 + \left(\frac{b}{a}\right)^2 \frac{\| \vec{w}_i \|_2^2}{\| \vec{v}_i \|_2^2}} \\
  &=\; \left(\tfrac{3}{5}\right)^{2n} \frac{1}{1 + \frac{1}{3}\left(\frac{b}{a}\right)^2} \\
  &\geq\; \left(\tfrac{3}{5}\right)^{2n} \frac{1}{1 + \frac{1}{3}3^2} \\
  &=\; \left(\tfrac{1}{2}(\tfrac{3}{5})^n\right)^2. \\
\end{align*}
\end{proof}
\begin{proof}[Proof of Proposition \ref{bound on exponent}]
Let $\vec{u}_0 \in \vec{K}$; by Proposition \ref{Hölder exponent general formula} we may then write
\[\alpha_{\inf}\left(u, s\right)\;
=\; \liminf_{n \to \infty}
   \frac 
   	{\ln \big\| \vec{M}_{a_1 \ldots a_n} \big\|}
   	{n \ln \left(\frac{1}{2}\right)}\;
=\; \liminf_{n \to \infty}
   \frac 
   	{\ln \big\| \vec{M}_{a_1 \ldots a_n}(\vec{u}_0) \big\|_2}
   	{n \ln \left(\frac{1}{2}\right)}.\]
We may now decompose the binary expansion of $s$ into clusters of consecutive equal bits. Let $n \in \mathbb{N}$; we have:
\[\vec{M}_{a_1 \ldots a_n}\; =\; \vec{M}_{a_1}\ldots\vec{M}_{a_n}\; =\; \vec{M}_{\alpha_1}^{l_1}\vec{M}_{\alpha_2}^{l_2}\ldots\vec{M}_{\alpha_{k(n)-1}}^{l_{k(n)-1}}\vec{M}_{\alpha_{k(n)}}^{r(n)},\]
where the values of $\alpha_i$ alternate between $0$ and $1$ (so that $\alpha_1 = a_1,\; \alpha_2 = 1-a_1,\; \alpha_3 = a_1,\; \ldots,\; \alpha_{k(n)} = a_n$) and all $l_i$ are positive. From Lemma \ref{Technical lemma for upper bound}, it follows that
\[\big\| \vec{M}_{a_1 \ldots a_n}(\vec{u}_0) \big\|_2\;
\geq\; \left(\tfrac{1}{2}\right)^{k(n)}\left(\tfrac{3}{5}\right)^{n}\big\|\vec{u}_0\big\|_2\]
hence (don't forget that $\ln \left(\frac{1}{2}\right) < 0$):
\[\frac{\ln \big\| \vec{M}_{a_1 \ldots a_n}(\vec{u}_0) \big\|_2}{n \ln \left(\frac{1}{2}\right)}\;
\leq\;   \frac{k(n)}{n}
       + \frac{\ln \left(\frac{3}{5}\right)}{\ln \left(\frac{1}{2}\right)}
       + \frac{\ln \big\|\vec{u}_0\big\|_2}{n}.\]
When $n$ grows, the third term tends to $0$, and the first term's lower limit is by definition equal to $d_{\inf}(x)$, hence the conclusion.
\end{proof}

Finding lower bounds for the Hölder exponent seems much harder than finding upper bounds. Here is a trivial one: we obviously have
\[\forall \vec{u} \in \vec{\mathcal{H}},\; \forall i \in \{0, 1\},\quad
\big\|M_i(\vec{u})\big\|\; \leq\; \tfrac{3}{5}\big\|\vec{u}\big\|,\]
hence the Hölder exponent is always greater or equal to $\frac{\ln \left(\frac{3}{5}\right)}{\ln \left(\frac{1}{2}\right)} \approx 0.737$. However, this is useless for finding points where the derivative is zero. The author suspects that if the binary expansion of $s$ never contains more than two equal consecutive bits, then $u'(s) = 0$ --- but it has yet to be checked.

\end{document}